\subjclass[2020]{46J10, 46A32, 46M05, 46L06}
\keywords{Birkhoff-James orthogonality, cross norm, injective tensor product, minimal tensor product of $C^{*}$-algebras, strong Birkhoff-James orthogonality}
\DeclareMathAlphabet{\mathpzc}{OT1}{pzc}{m}{it}
\DeclareMathOperator{\sign}{sign}
\newtheorem{thm}{Theorem}[section]
\newtheorem{lem}[thm]{Lemma}
\newtheorem{propn}[thm]{Proposition}
\theoremstyle{definition}
\newtheorem{remark}[thm]{Remark}
\newtheorem{example}[thm]{Example}
\newcommand{\omin}{\otimes^{\min}}
\newcommand{\bj}{\perp_{BJ}}
\newcommand{\nbj}{\not\perp_{BJ}}
\newcommand{\olm}{\otimes^{\lambda}}
\newcommand{\mC}{\mathbb{C}}
\newcommand{\ot}{\otimes}
\author[Mohit and R.~Jain]{Mohit and Ranjana Jain}
\address{Mohit, Department of  Mathematics, University of Delhi, Delhi}
\email{mohitdhandamaths@gmail.com}
\address{Ranjana Jain, Department of Mathematics, University of Delhi, Delhi}
\email{rjain@maths.du.ac.in}
\begin{document}
\title[Orthogonality in certain tensor products of Banach spaces]{Birkhoff-James orthogonality in certain tensor products of Banach spaces}
\maketitle
\textbf{Abstract:} In this article, the relationship between Birkhoff-James orthogonality of elementary tensors in certain tensor product spaces with the Birkhoff-James orthogonality of individual elements in their respective spaces is studied.

\section{Introduction}

Orthogonality plays a vital role in the study of  Hilbert spaces. In the last few decades, there have been several generalizations of the notion of orthogonality in the setting of  normed spaces. Among them, Birkhoff-James orthogonality (\cite{GB,H11}) has been studied extensively and has found various significant relationships with other geometric properties of normed spaces.
 For a normed space $X$ over the scalar field $\mathbb{F}$ ($\mathbb{R}$ or $\mathbb{C}$)  and $x, y\in\;X,$ we say that $x$ is {\it Birkhoff-James orthogonal} to $y$ (denoted by $x\perp_{BJ}y$) if $$||x+\alpha y||\geq||x||,\  \text{for all} \ \alpha\in\;\mathbb{F}.$$
 
 It is closely related to the notion of best approximation to a point in a subspace as is evident from the fact that for a normed space $V$ and a subspace $W$ of $V$, a point $w \in W$ is best approximation to $v$ in $V$ if and only if $v-w \bj W$. It is also useful to obtain some distance formulae in normed spaces.
  For more details on the geometric significance of BJ- orthogonality, one may refer to \cite{G2,GB,H13,H10,H11} and the references therein.
  
On the other hand, as has been observed with various categories, the theories of tensor products of Banach spaces  and of $C^*$-algebras are indispensable to the understanding of these objects. Thus, it is quite natural to analyze the notion of Birkhoff-James orthogonality in tensor product spaces.  Our interest in fact grew from this very basic curiosity. Quite surprisingly,  this aspect has been hardly touched upon and there are no substantial results in the literature.  Closely related to our interest, Light and Cheney (in \cite{cheney}) discussed the theory of approximation in terms of proximinality in various tensor products of spaces which can be linked with the Birkhoff-James orthogonality to some extent.  For instance, they proved that for subspaces $G$ and $H$ having linear proximity maps in Banach spaces $X$ and $Y$ respectively,  the subspace $\overline{X {\otimes} H}^{\alpha} +\overline {G{\otimes}Y}^{\alpha}$ also has a linear proximity map, and is proximinal in $X\otimes_{\alpha}Y$, where $\otimes_{\alpha}$ is a uniform cross norm. In particular,  for finite-dimensional subspaces $G$ and $H$ of $L_{1}(S)$ and $L_{1}(T)$, respectively, $S$ and $T$ being finite measure spaces, the subspace $L_{1}\otimes H+G\otimes L_{1}(T)$ is proximinal in $L_{1}(S\times T)$.
	 
It is natural to ask  how Birkhoff-James orthogonality of elementary tensors in various tensor products of Banach spaces is related to the Birkhoff-James orthogonality of the individual elements in their respective spaces. As a first step in this direction, for Banach spaces $X$ and $Y$, any cross norm $\|\cdot\|_{\alpha}$ on their algebraic tensor product $X\otimes Y$ and elements $x_1,x_2 \in X$, $y_1,y_2 \in Y$, one can ask the following questions:
\begin{enumerate}
	\item If $x_{1}\perp_{BJ}x_{2}$ and $y_{1}\perp_{BJ}y_{2} $ then is $x_{1}\otimes y_{1}\perp_{BJ}x_{2}\otimes y_{2}$?
	\item If $x_{1}\otimes y_{1}\perp_{BJ}x_{2}\otimes y_{2}$ in $X\otimes^{\alpha}Y$ then is $x_{1}\perp_{BJ}x_{2}$ and(or) $y_{1}\perp_{BJ}y_{2} $?
	\item For subspaces $X_1$ and $Y_1$  of $X$ and $Y$ respectively, and $x\ot y \bj X_1 \ot Y_1$, can we say that $x \bj X_1$ or $y \bj Y_1$?
\end{enumerate}

 In this article we make an attempt to answer these questions by considering different cross norms on $X\otimes Y$. We show that the first assertion is always true, whereas we provide an example (see Example \ref{failexample}) to show that Question 2  does not have an affirmative answer in general. This also leads to a negative answer of Question 3 (see, \ref{bj-subspaces}). More precisely, we demonstrate that it fails in $C_{\mathbb{C}}(K_{1})\otimes^{\alpha}C_{\mathbb{C}}(K_{2})$, where $C_{\mathbb{F}}(K)$ denotes the space of $\mathbb{F}$-valued continuous functions defined on a compact Hausdorff space $K$  equipped with the sup norm and $\|\cdot\|_{\alpha}$ is any cross norm. However, this is not a general phenomenon. By deploying different techniques, we discuss various spaces and tensor products for which Question 2 in fact has a positive answer. In particular, we prove that it has an affirmative answer for the following spaces:
 \begin{itemize}
 	\item  $X \otimes^{\lambda} Y$, where $X$ and $Y$ are real normed spaces and $\otimes^{\lambda}$ is the injective tensor product;
 	\item $L^{p}(S,\mu)\otimes^{\Delta_{p}} L^{p}(T,\nu),$ where $1< p<\infty$, $(S, \mu)$ and $(T, \nu)$ are positive measure spaces and $\otimes^{\Delta_{p}}$ is the natural norm on the tensor product of $p$-integrable functions (see (\ref{pnorm});
 	\item $B(H)\otimes^{\min}B(K)$ (for elementary tensors of rank one operators), $H$ and $K$ being Hilbert spaces;
 	\item $C_{\mathbb{C}}(K_{1})\otimes^{\lambda}C_{\mathbb{C}}(K_{2})$, with Birkhoff-James orthogonality being replaced by strong Birkhoff-James orthogonality.
 	\end{itemize}

\section{Preliminaries}
Let us first recall some definitions and different notions of tensor norms. Given Banach spaces $X$ and $Y,$ {\it Banach space injective tensor norm} on the algebraic tensor product $X\otimes Y$ is defined as
 $$||u||_{\lambda}= \sup \left\{ \left|\sum\limits_{i=1}^n{f(x_{i})g(y_{i})}\right|: f\in\;B_{X^{*}}, g\in\;B_{Y^{*}}\right\}, u=\sum\limits_{i=1}^nx_{i}\otimes y_{i} \in X\otimes Y,$$
 where $B_{X^*}$ denotes the closed unit ball of $X^*$. The completion of $X\otimes Y$ with respect to $\|\cdot \|_{\lambda} $ is  called the {\it Banach space injective tensor product} of $X$ and $Y$ and is denoted by $X\otimes^{\lambda}Y$. Similarly, given two $C^{*}$-algebras $A$ and $B$, the minimal tensor norm on the algebraic tensor product $A\otimes B$ is defined as$$||u||_{\min}=\sup\left\{||(\pi_{1}\otimes\pi_{2})(u)||\right\},u\in A\otimes B,$$ where $\pi_{1},\pi_{2}$ denote the representations of $A$ and $B$ respectively. The completion of $A\otimes B$ with respect to $\|\cdot\|_{\min}$ is called the minimal $C^*$-tensor product of $A$ and $B$ and is denoted by $A\otimes^{\min}B.$ For a detailed study of these norms one may refer to \cite{cheney,H3,H2}. A norm $\| \cdot \|_{\alpha} $ on $X\otimes Y$ is said to be  a {\it cross norm} if $\| x\otimes y\|_{\alpha} = \|x\| \|y\|$ for all $x\in X, y \in Y$.
 
  \vspace*{1mm}
  
  We now recall and collect some important results and identifications which will be useful in the later discussion. The first in the list is a basic yet an important and useful characterization of BJ-orthogonality in terms of the linear functionals. 
  
  \begin{thm}\cite[Theorem 2.1]{H11}\label{james}
  	In a normed space $X$, $x\perp_{BJ} y $ if and only if there exists a linear functional $f \in X^*$ such that $|f(x)|= \| f\| \|x\|$ and $f(y)=0$.
  \end{thm}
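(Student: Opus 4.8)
The plan is to prove the two implications separately, concentrating the real content in the forward direction. The reverse implication is a one-line norming estimate, while the forward implication is where a functional must be manufactured, and this I would do by a hand construction on a two-dimensional subspace followed by an appeal to the Hahn--Banach theorem.

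For the easy direction, suppose such a (nonzero) functional $f$ exists. The case $x=0$ gives $x \bj y$ trivially, so I assume $x\neq 0$ and normalize so that $\|f\|=1$, whence $|f(x)|=\|x\|$. Then for every scalar $\alpha\in\mathbb{F}$,
$$\|x+\alpha y\|\geq |f(x+\alpha y)| = |f(x)+\alpha f(y)| = |f(x)| = \|x\|,$$
where $f(y)=0$ is used at the penultimate step. This is precisely the definition of $x \bj y$.

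For the converse, again assuming $x\neq 0$ (the case $x=0$ being vacuous), I set $Z=\operatorname{span}\{x,y\}$ and attempt to define $g\in Z^{*}$ by $g(\beta x+\gamma y)=\beta\,\|x\|$. Granting that this is well defined, properties $g(x)=\|x\|$ and $g(y)=0$ are immediate, and the norm bound $\|g\|\leq 1$ is exactly where the hypothesis enters: for $\beta\neq 0$,
$$|g(\beta x+\gamma y)| = |\beta|\,\|x\| \leq |\beta|\,\Big\|x+\tfrac{\gamma}{\beta}\,y\Big\| = \|\beta x+\gamma y\|,$$
the inequality being the defining property $\|x+\alpha y\|\geq\|x\|$ of $x\bj y$ with $\alpha=\gamma/\beta$, while $\beta=0$ is trivial. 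Extending $g$ to $f\in X^{*}$ with $\|f\|=\|g\|\leq 1$ by Hahn--Banach, and observing that $f(x)=\|x\|$ forces $\|f\|\geq 1$, yields $\|f\|=1$, hence $|f(x)|=\|x\|=\|f\|\,\|x\|$ together with $f(y)=0$, as required.

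The step I expect to need the most care is the well-definedness of $g$, which forces a separate treatment of the degenerate configuration in which $x$ and $y$ are linearly dependent. If $y=c\,x$ with $c\neq 0$, then choosing $\alpha=-1/c$ in the orthogonality condition gives $0\geq\|x\|$, contradicting $x\neq 0$; so the only dependent case compatible with the hypothesis is $y=0$, where one instead defines $g$ on $\operatorname{span}\{x\}$ and $f(y)=0$ holds automatically. Once linear independence of $x,y$ is secured the representation $\beta x+\gamma y$ is unique and $g$ is unambiguous. I would finally remark that the argument is indifferent to the scalar field: in the complex case $\alpha$ and $\gamma/\beta$ merely range over $\mC$ and the complex Hahn--Banach theorem applies verbatim.
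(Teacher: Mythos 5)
Your proof is correct, but note that the paper itself offers no proof of this statement: it is quoted verbatim as a known result from James' 1947 paper (reference \cite{H11} in the bibliography), listed among the preliminaries. So there is nothing internal to compare against; your argument stands on its own, and it is in fact the classical one — the sufficiency via the norming inequality $\|x+\alpha y\|\geq |f(x+\alpha y)|=|f(x)|=\|x\|$, and the necessity by defining $g(\beta x+\gamma y)=\beta\|x\|$ on $\operatorname{span}\{x,y\}$, using $x\bj y$ to get $\|g\|\leq 1$, and extending by Hahn--Banach. Two small observations. First, your parenthetical ``(nonzero)'' in the easy direction is not cosmetic but essential: as literally stated, the condition $|f(x)|=\|f\|\,\|x\|$, $f(y)=0$ is satisfied by $f=0$, so the theorem is only true under the implicit convention $f\neq 0$ (or $\|f\|=1$), and you were right to make that explicit. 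Second, the case $x=0$ in the forward direction is not quite ``vacuous'': $0\bj y$ always holds, so a functional must still be produced, and if one insists on $f\neq 0$ this can actually fail (take $X=\operatorname{span}\{y\}$, $y\neq 0$, where no nonzero functional kills $y$); the degenerate case is reconciled only by allowing $f=0$ there, or by reading the theorem for $x\neq 0$, which is how it is invoked throughout the paper. Neither point affects the substance of your argument.
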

   It is a well known fact that the tensor product of continuous functions over compact spaces can be treated as a continuous function on an appropriate compact space. More precisely,
  
  \begin{thm}\cite[4.2 (3)]{H6}\label{idef1}
  	Let $K_{1}$ $ K_{2}$ be compact Hausdorff spaces. The injective tensor product $C_{\mathbb{F}}(K_{1})\otimes^{\lambda}C_{\mathbb{F}}(K_{2})$ is isometrically isomorphic to the space $C_{\mathbb{F}}(K_{1}\times K_{2})$ via the identification $f_{1}\otimes f_{2}\rightarrow f_1f_2$ where $f_1f_2: K_{1}\times K_{2}\rightarrow \mathbb{F}$ is defined as $f_1f_2(k_{1},k_{2})=f_{1}(k_{1})f_{2}(k_{2})$ for $k_i \in K_i$.	
  \end{thm}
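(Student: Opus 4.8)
The plan is to define the natural bilinear map $(f_1,f_2) \mapsto f_1 f_2$ from $C_{\mathbb{F}}(K_1) \times C_{\mathbb{F}}(K_2)$ into $C_{\mathbb{F}}(K_1 \times K_2)$ and invoke the universal property of the algebraic tensor product to obtain a well-defined linear map $\Phi : C_{\mathbb{F}}(K_1) \otimes C_{\mathbb{F}}(K_2) \to C_{\mathbb{F}}(K_1 \times K_2)$ carrying $\sum_i f_i \otimes g_i$ to the function $(k_1,k_2) \mapsto \sum_i f_i(k_1) g_i(k_2)$. It then suffices to show that $\Phi$ is isometric (for $\|\cdot\|_\lambda$ on the domain and the sup norm on the codomain) and has dense range; these two facts let $\Phi$ extend to an isometric isomorphism on the completion.

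For the isometry, fix $u = \sum_{i=1}^n f_i \otimes g_i$ and examine the supremum defining $\|u\|_\lambda$. First I would observe that the map $(\phi,\psi) \mapsto \sum_i \phi(f_i)\psi(g_i)$ is bilinear, so its modulus is separately convex and weak-$*$ continuous on the weak-$*$ compact convex set $B_{C(K_1)^*} \times B_{C(K_2)^*}$. By the Bauer maximum principle (a consequence of Krein--Milman) applied in each variable, the supremum is attained at a pair of extreme points. The key input is the Arens--Kelley description of these extreme points: they are exactly the scaled evaluations $\lambda \delta_k$ with $|\lambda| = 1$ (with $\lambda = \pm 1$ in the real case). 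Substituting $\phi = \lambda \delta_{k_1}$ and $\psi = \mu \delta_{k_2}$ gives $\bigl| \sum_i \phi(f_i)\psi(g_i) \bigr| = \bigl| \sum_i f_i(k_1) g_i(k_2) \bigr| = |\Phi(u)(k_1,k_2)|$, since the unimodular factors cancel under the modulus. Taking the supremum over all $k_1, k_2$ then yields $\|u\|_\lambda = \|\Phi(u)\|_\infty$.

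For density of the range, I would note that $\Phi\bigl(C_{\mathbb{F}}(K_1) \otimes C_{\mathbb{F}}(K_2)\bigr)$ is the subalgebra of $C_{\mathbb{F}}(K_1 \times K_2)$ generated by the functions $(k_1,k_2) \mapsto f(k_1) g(k_2)$. This subalgebra contains the constants (as $\Phi(1 \otimes 1) = 1$), separates the points of $K_1 \times K_2$ (two distinct points differ in some coordinate, which is separated by a function in the corresponding factor), and in the complex case is closed under conjugation since each $C_{\mathbb{C}}(K_i)$ is. The Stone--Weierstrass theorem then gives density. Since $\Phi$ is a linear isometry with dense range, it extends uniquely to an isometric isomorphism from $C_{\mathbb{F}}(K_1) \otimes^{\lambda} C_{\mathbb{F}}(K_2)$ onto $C_{\mathbb{F}}(K_1 \times K_2)$.

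The hard part, I expect, is the reduction of the injective-norm supremum to point evaluations: one must justify both the passage to extreme points (via the separate-convexity / Bauer maximum principle argument) and the precise Arens--Kelley identification of those extreme points. This identification is really the crux of why the injective cross norm on $C_{\mathbb{F}}(K_1) \otimes C_{\mathbb{F}}(K_2)$ coincides with the sup norm of the corresponding function on $K_1 \times K_2$; everything else is routine bookkeeping together with the standard Stone--Weierstrass density argument.
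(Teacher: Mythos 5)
The paper does not prove this statement at all: it is imported verbatim from Defant--Floret \cite[4.2 (3)]{H6} as a known identification, so there is no internal argument to compare yours against; I can only assess your proof on its own terms, and it is correct. The universal-property definition of $\Phi$, the reduction of the injective-norm supremum to point evaluations, the Stone--Weierstrass density argument, and the extension of an isometry with dense range to an isometric isomorphism of the completion are all sound (and the joint weak$^*$ continuity needed to run Bauer's principle iteratively does hold, since $u$ is a finite sum of elementary tensors). My one substantive remark is that you have misplaced the crux: the Bauer maximum principle and the Arens--Kelley theorem are much more than is needed, because one does not need the functionals $\lambda\delta_k$ to \emph{exhaust} the extreme points of $B_{C(K)^*}$ --- only that the evaluations $\delta_k$ form a norming set, which is immediate from the definition of the supremum norm. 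Concretely, applying $\sup_{\chi\in B_{X^*}}|\chi(x)|=\|x\|$ (Hahn--Banach) alternately in each slot and interchanging suprema, one gets for $u=\sum_i f_i\otimes g_i$:
\begin{align*}
\|u\|_\lambda
&= \sup_{\phi}\sup_{\psi}\Bigl|\psi\Bigl(\sum_i \phi(f_i)g_i\Bigr)\Bigr|
= \sup_{\phi}\Bigl\|\sum_i \phi(f_i)g_i\Bigr\|_{C(K_2)}
= \sup_{k_2}\sup_{\phi}\Bigl|\phi\Bigl(\sum_i g_i(k_2)f_i\Bigr)\Bigr| \\
&= \sup_{k_2}\Bigl\|\sum_i g_i(k_2)f_i\Bigr\|_{C(K_1)}
= \sup_{(k_1,k_2)}\Bigl|\sum_i f_i(k_1)g_i(k_2)\Bigr| = \|\Phi(u)\|_{\infty},
\end{align*}
where $\phi,\psi$ range over the dual unit balls. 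This is the standard textbook route; it buys elementarity and self-containedness. Your extreme-point argument is valid and buys a more general principle (the injective norm can always be computed over extreme points of the dual balls, useful whenever those are explicitly known), but for this theorem it is machinery one can dispense with. Your density and completion steps coincide with the standard ones.
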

  
  There is an interesting geometric charaterization of BJ-orthogonality in $C_{\mathbb{C}}(K)$ which follows immediately from \cite[Corollary 2.1]{H8}.
  \begin{thm}\label{C(K)}
  	Let $K$ be a compact Hausdorff space, $f,g \in\;C_{\mathbb{C}}(K)$ and $M_{f}:=\{t\in\;K : |f(t)|=||f||\}.$ Then $f\perp_{BJ}g$ if and only if the set $A=\{\overline{f(t)}g(t) : t\in\;M_{f}\}$ is not contained in an open half plane (in $\mathbb{C}$) with boundary that contains the origin. Equivalently, $f\perp_{BJ}g$ if and only if the closed convex hull of $A$ contains the origin. 
  \end{thm}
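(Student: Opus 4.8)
The plan is to combine the functional characterization of Theorem~\ref{james} with the Riesz representation of $C_{\mathbb{C}}(K)^{*}$ as the space of regular complex Borel measures on $K$ (equipped with the total variation norm). By Theorem~\ref{james}, $f\perp_{BJ}g$ holds precisely when there is a regular complex Borel measure $\mu$ on $K$ with $\left|\int_{K}f\,d\mu\right|=\|\mu\|\,\|f\|$ and $\int_{K}g\,d\mu=0$. Thus the whole problem reduces to understanding exactly which measures norm $f$, and then imposing the annihilation condition on $g$.

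First I would pin down the norming measures. Writing the polar decomposition $d\mu=h\,d|\mu|$ with $|h|=1$ $|\mu|$-a.e., the chain $\left|\int_{K}f\,d\mu\right|\le\int_{K}|f|\,d|\mu|\le\|f\|\,\|\mu\|$ must collapse to equalities. The second equality forces $|f|=\|f\|$ on $\operatorname{supp}|\mu|$, i.e. $|\mu|$ is carried by $M_{f}$; the first forces $fh$ to have constant argument $|\mu|$-a.e., which (since $|f|=\|f\|$ there) means $h=c\,\overline{f}/\|f\|$ for some unimodular constant $c$. Consequently, after discarding the nonzero scalar factors, the annihilation condition $\int_{K}g\,d\mu=0$ becomes $\int_{M_{f}}\overline{f}\,g\,d|\mu|=0$. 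A short converse check (set $d\mu=\overline{f}\,d\nu/\|f\|$) confirms that $f\perp_{BJ}g$ if and only if there is a nonzero positive regular Borel measure $\nu$ on $M_{f}$ with $\int_{M_{f}}\overline{f}\,g\,d\nu=0$.

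Next I would read this as a convex-geometry statement in $\mathbb{C}\cong\mathbb{R}^{2}$. Since $M_{f}$ is compact and $t\mapsto\overline{f(t)}\,g(t)$ is continuous, the set $A$ is compact, and the barycenters $\int_{M_{f}}\overline{f}\,g\,d\nu$ of probability measures $\nu$ on $M_{f}$ fill out exactly the closed convex hull of $A$ (every such barycenter lies in $\operatorname{conv}A$, and by Carath\'eodory every point of $\operatorname{conv}A$ is the barycenter of a finitely supported measure). Hence a suitable $\nu$ exists if and only if $0\in\overline{\operatorname{conv}}\,A$, which is the second formulation in the statement. To obtain the half-plane formulation I would invoke separation: for the compact set $A$ one has $0\notin\overline{\operatorname{conv}}\,A$ if and only if there exist $w\neq 0$ and $\delta>0$ with $\operatorname{Re}(\overline{w}z)\ge\delta$ for all $z\in A$, that is, if and only if $A$ lies in an open half-plane bounded by a line through the origin; taking contrapositives yields the claimed equivalence.

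The main obstacle is the equality analysis identifying the norming measures: correctly extracting both that $|\mu|$ concentrates on $M_{f}$ and that the phase of $\mu$ is forced to be $\overline{f}/\|f\|$ is precisely what makes the annihilation condition reduce to $\int_{M_{f}}\overline{f}\,g\,d\nu=0$. The remaining steps---identifying the barycenter set with the convex hull (using compactness of $M_{f}$ and continuity of $\overline{f}g$) and the two-dimensional separating-line argument---are then routine. Alternatively, the whole equivalence can be quoted directly from \cite[Corollary~2.1]{H8}.
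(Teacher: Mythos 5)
Your proposal is correct, but it is worth pointing out that the paper itself contains no proof of this statement: Theorem~\ref{C(K)} sits in the Preliminaries and is simply quoted as following ``immediately from \cite[Corollary 2.1]{H8}'' (Ke\v{c}ki\'{c}). So your argument differs from the paper's treatment by virtue of actually being a proof. Your route --- Theorem~\ref{james} plus the Riesz representation of $C_{\mathbb{C}}(K)^{*}$, then the equality analysis in the chain $\left|\int_{K}f\,d\mu\right|\le\int_{K}|f|\,d|\mu|\le\|f\|\,\|\mu\|$ showing that norming measures are exactly the positive regular measures carried by $M_{f}$ twisted by the phase $\overline{f}/\|f\|$, followed by the barycenter/Carath\'eodory identification of $\overline{\operatorname{conv}}A$ and planar strict separation --- is sound at every step, and it correctly uses compactness of $A$ twice (to conclude $\overline{\operatorname{conv}}A=\operatorname{conv}A$, and to pass between ``$0\notin\overline{\operatorname{conv}}A$'' and containment in an open half-plane whose boundary passes through the origin). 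What your approach buys is transparency: the reader sees precisely why the geometric condition on $A=\{\overline{f(t)}g(t):t\in M_{f}\}$ emerges from the structure of the norming functionals, and the argument is self-contained modulo standard measure theory and convex geometry. What the paper's citation buys is brevity, since the equivalence is already in the literature. Two small housekeeping remarks if you write this up: (i) your formulas divide by $\|f\|$, so the degenerate case $f=0$ (where $A=\{0\}$ and both sides of the equivalence hold trivially) should be dismissed at the outset; (ii) in invoking Theorem~\ref{james} you should insist that the functional (equivalently, the measure $\mu$) is nonzero, as the paper's statement technically admits the zero functional, which would trivialize the characterization.
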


We have an elegant characterization of BJ-orthogonality for vector valued continuous functions also.

\begin{thm}\cite[Theorem 2.1]{RSS}\label{C(K,X)}  
Let $K$ be a compact Hausdorff space, $X$ be a real normed space and $f,g \in C_{\mathbb{R}}(K,X)$ be non-zero elements. Then $f \perp_{BJ} g$ if and only if there exists $k_1,k_2 \in M_f$ such that $g(k_1)	\in f(k_1)^+$ and $g(k_2)	\in f(k_2)^-$,
	\end{thm}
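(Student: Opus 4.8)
The plan is to reduce the statement to a computation of one-sided derivatives of the sup norm, working throughout with real scalars (where the cones $f(k)^{\pm}$ make sense). Recall that for a nonzero $x$ in a real normed space, the sets $x^{+}$ and $x^{-}$ consist of those $y$ with $\|x+\lambda y\|\ge\|x\|$ for all $\lambda\ge 0$, respectively all $\lambda\le 0$. Writing $\rho_{\pm}(x,y)=\lim_{\lambda\to 0^{\pm}}(\|x+\lambda y\|-\|x\|)/\lambda$ for the one-sided norm derivatives (which always exist, since $\lambda\mapsto\|x+\lambda y\|$ is convex), monotonicity of convex difference quotients gives $y\in x^{+}\iff\rho_{+}(x,y)\ge 0$ and $y\in x^{-}\iff\rho_{-}(x,y)\le 0$. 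Now set $\Phi(\lambda)=\|f+\lambda g\|$ and $\phi_{k}(\lambda):=\|f(k)+\lambda g(k)\|$. By definition $f\perp_{BJ}g$ means that $0$ is a global minimizer of the convex function $\Phi$, and for a convex function this is equivalent to $\Phi'_-(0)\le 0\le\Phi'_+(0)$.

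The heart of the argument is the Danskin-type identity
\[
\Phi'_+(0)=\max_{k\in M_f}\rho_+(f(k),g(k)),\qquad \Phi'_-(0)=\min_{k\in M_f}\rho_-(f(k),g(k)),
\]
for the directional derivatives of the supremum $\Phi(\lambda)=\sup_{k\in K}\phi_{k}(\lambda)$. I would prove the first identity and obtain the second by replacing $g$ with $-g$. The inequality $\Phi'_+(0)\ge\rho_+(f(k_0),g(k_0))$ for each fixed $k_0\in M_f$ is immediate, since $\Phi\ge\phi_{k_0}$ with equality at $0$; taking the supremum over $k_0\in M_f$ gives one direction. For the reverse inequality I would pick $\lambda_n\to 0^{+}$, choose maximizers $k_n$ with $\Phi(\lambda_n)=\phi_{k_n}(\lambda_n)$ (available because $k\mapsto\phi_k(\lambda_n)$ is continuous on the compact $K$), and pass to a subnet along which $k_n\to k^{*}$. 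Continuity of $\Phi$ forces $\Phi(\lambda_n)\to\|f\|$, while $f(k_n)\to f(k^{*})$ and $\lambda_n g(k_n)\to 0$ give $\Phi(\lambda_n)\to\|f(k^{*})\|$, so $\|f(k^{*})\|=\|f\|$, i.e.\ $k^{*}\in M_f$. Using $\|f\|\ge\|f(k_n)\|=\phi_{k_n}(0)$ together with monotonicity of convex difference quotients, one bounds $(\Phi(\lambda_n)-\|f\|)/\lambda_n$ above by $(\phi_{k_n}(s)-\phi_{k_n}(0))/s$ for any fixed $s>0$ and all large $n$; letting $n\to\infty$ and then $s\to 0^{+}$ yields $\Phi'_+(0)\le\rho_+(f(k^{*}),g(k^{*}))$, so the supremum is attained at a point of $M_f$.

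Finally I would translate the derivative conditions back into the cones. Since the maximum in the first identity is attained, $\Phi'_+(0)\ge 0$ holds if and only if $\rho_+(f(k_1),g(k_1))\ge 0$ for some $k_1\in M_f$, that is $g(k_1)\in f(k_1)^{+}$; dually, $\Phi'_-(0)\le 0$ holds if and only if $\rho_-(f(k_2),g(k_2))\le 0$ for some $k_2\in M_f$, that is $g(k_2)\in f(k_2)^{-}$. Combining these with the convex-minimizer criterion $\Phi'_-(0)\le 0\le\Phi'_+(0)$ gives exactly the stated equivalence, with $k_1$ and $k_2$ permitted to differ.

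I expect the main obstacle to be the upper bound in the Danskin-type identity: one must simultaneously send the perturbation $\lambda_n\to 0^{+}$ and control the moving maximizers $k_n$, and the interchange of the two limiting processes (over $n$ and over the auxiliary parameter $s$) is precisely where compactness of $K$, continuity of $f$ and $g$, and convexity of each $\phi_k$ must all be used together. Verifying that a genuine maximizer $k^{*}$ lands in $M_f$ — rather than merely an approximate one — is the crux; the remaining steps are routine convex-analysis bookkeeping.
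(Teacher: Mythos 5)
Your proof is correct, but note the context: the paper does not prove this statement at all --- it is quoted verbatim as Theorem 2.1 of the cited reference [RSS] (Roy--Senapati--Sain) and used as a preliminary tool, so there is no in-paper argument to compare against. Your Danskin-type route is a legitimate, self-contained derivation: the identification $y\in x^{+}\iff\rho_{+}(x,y)\ge 0$ and $y\in x^{-}\iff\rho_{-}(x,y)\le 0$ is exactly right by monotonicity of convex difference quotients, the global-minimum criterion $\Phi'_{-}(0)\le 0\le\Phi'_{+}(0)$ is the correct convex reformulation of $f\perp_{BJ}g$, and your proof of the max/min formulas over $M_f$ (lower bound by restriction to a fixed $k_0\in M_f$; upper bound via maximizers $k_n$ at $\lambda_n\to 0^{+}$, a subnet limit $k^{*}$, the observation $k^{*}\in M_f$, and the two-stage limit first in $n$ then in the auxiliary parameter $s$) is sound --- in particular the attainment of the maximum at $k^{*}\in M_f$ is what licenses the final translation ``$\max\ge 0$ iff some $k_1\in M_f$ has $g(k_1)\in f(k_1)^{+}$''. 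For comparison, the proof in the cited source (in the style of Sain's work on norm-attainment sets) argues the nontrivial direction by contradiction: writing $M_f=M_{+}\cup M_{-}$ using the dichotomy $g(k)\in f(k)^{+}$ or $g(k)\in f(k)^{-}$, one assumes, say, $M_{+}=\emptyset$ and uses compactness of $M_f$ plus continuity to manufacture a single $\lambda>0$ with $\|f+\lambda g\|<\|f\|$. Your argument packages that same compactness into the Danskin formula; what it buys is a reusable, quantitative statement (the exact one-sided derivatives of $\lambda\mapsto\|f+\lambda g\|$), at the cost of slightly more convex-analysis bookkeeping than the direct contradiction argument requires.
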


On the similar lines we have some identification for the $p$-integrable functions. 


For an arbitrary measure space $(S,\mu)$ and a Banach space $X$, let  $L^{p}(\mu, X)$, $1\leq p<\infty$, denote the space of Bochner $p$-integrable functions from $S$ to $X.$ Consider the norm on $L^{p}(S, \mu)\otimes X$, written as $\Delta_{p}$, induced via the injective map $\phi:L^{p}(S,\mu)\otimes X\rightarrow L^{p}(\mu, X)$ given by $\phi(f\otimes x)=f(\cdot)x$, that is, 
\begin{equation}\label{pnorm}
	||u||_{\Delta_{p}}=||\phi(u)||_{L^{p}(\mu, X)}, \quad \forall \ \ u\in L^{p}(S, \mu)\otimes X.
\end{equation}
We take $X=L^{p}(T, \nu)$ and assume that $L^{p}(S,\mu)\otimes^{\Delta_{p}} L^{p}(T,\nu)$ denotes the completion of  $L^{p}(S,\mu)\otimes L^{p}(T,\nu)$ with respect to the $\Delta_{p}$-norm. 
\begin{thm}\cite[Section 7.2]{H6}\label{identif2}	
	The space $ L^{p}(S, \mu) \otimes^{\Delta_{p}} L^{p}(T, \nu) $ is isometrically isomorphic to $L^{p}(S\times T, \mu\times\nu)$, $1<p<\infty$. 
	
\end{thm}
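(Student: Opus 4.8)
The plan is to route the identification through the Bochner space $L^{p}(\mu, L^{p}(T,\nu))$ that already appears in the very definition of the $\Delta_{p}$-norm, and then to invoke the classical isometric identification of an $L^{p}$-Bochner space with an ordinary $L^{p}$-space over the product measure. First I would record that, by (\ref{pnorm}), the map $\phi:L^{p}(S,\mu)\otimes L^{p}(T,\nu)\to L^{p}(\mu,L^{p}(T,\nu))$, $\phi(f\otimes g)=f(\cdot)g$, is by construction a linear isometry for $\|\cdot\|_{\Delta_{p}}$. Hence the completion $L^{p}(S,\mu)\otimes^{\Delta_{p}}L^{p}(T,\nu)$ is isometrically isomorphic to the closure of $\phi\big(L^{p}(S,\mu)\otimes L^{p}(T,\nu)\big)$ inside $L^{p}(\mu,L^{p}(T,\nu))$. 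So it suffices to (a) identify $L^{p}(\mu,L^{p}(T,\nu))$ isometrically with $L^{p}(S\times T,\mu\times\nu)$, and (b) check that the image of the elementary tensors is dense.

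For (a) I would introduce the natural map $J:L^{p}(\mu,L^{p}(T,\nu))\to L^{p}(S\times T,\mu\times\nu)$ given on a representative by $J(F)(s,t)=F(s)(t)$, and verify isometry by a Fubini--Tonelli computation:
\begin{align*}
\|J(F)\|_{L^{p}(S\times T)}^{p} &= \int_{S\times T}|F(s)(t)|^{p}\, d(\mu\times\nu)\\
&= \int_{S}\Big(\int_{T}|F(s)(t)|^{p}\, d\nu(t)\Big)\, d\mu(s)\\
&= \int_{S}\|F(s)\|_{L^{p}(T,\nu)}^{p}\, d\mu(s) = \|F\|_{L^{p}(\mu,L^{p}(T,\nu))}^{p}.
\end{align*}
Surjectivity of $J$ follows because every $h\in L^{p}(S\times T)$ induces $F(s)=h(s,\cdot)$, which lies in $L^{p}(T,\nu)$ for $\mu$-a.e.\ $s$ by Fubini, with $J(F)=h$; the genuinely technical point here is the Bochner (strong) measurability of $s\mapsto F(s)$, which is settled via approximation by simple functions and Pettis' measurability theorem.

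Under the composite $J\circ\phi$, an elementary tensor $f\otimes g$ is carried to the product function $(s,t)\mapsto f(s)g(t)$. For (b) it then remains to show that the linear span of such product functions is dense in $L^{p}(S\times T,\mu\times\nu)$; but for measurable rectangles $A\times B$ of finite measure we have $\mathbf{1}_{A\times B}=\mathbf{1}_{A}(s)\mathbf{1}_{B}(t)=(J\circ\phi)(\mathbf{1}_{A}\otimes\mathbf{1}_{B})$, and finite linear combinations of indicators of measurable rectangles are $L^{p}$-dense, since the rectangles generate the product $\sigma$-algebra and their finite disjoint unions form a generating algebra. Combining (a) and (b), $J\circ\phi$ extends to an isometric isomorphism of $L^{p}(S,\mu)\otimes^{\Delta_{p}}L^{p}(T,\nu)$ onto $L^{p}(S\times T,\mu\times\nu)$ sending $f\otimes g$ to the function $f(s)g(t)$. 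I expect the main obstacle to be the measurability and surjectivity bookkeeping in step (a); the isometry identity and the density in step (b) are routine approximation arguments.
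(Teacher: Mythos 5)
The paper offers no proof of this statement: it is imported verbatim from Defant--Floret \cite[Section 7.2]{H6}, so the only comparison available is with the standard argument in that reference --- which is essentially the route you take (the algebraic tensor product sits isometrically and densely in the Bochner space $L^{p}(\mu,L^{p}(T,\nu))$, which in turn is identified with $L^{p}(S\times T,\mu\times\nu)$ via Fubini--Tonelli, with density supplied by indicators of finite-measure rectangles). Your argument is correct in substance, with two caveats you only partially flag. First, the pointwise formula $J(F)(s,t)=F(s)(t)$ is not well defined on the whole Bochner space without producing a jointly measurable representative (each $F(s)$ is only an equivalence class of functions on $T$); the clean fix is to define $J$ only on simple functions, or directly on $\phi\bigl(L^{p}(S,\mu)\otimes L^{p}(T,\nu)\bigr)$, where joint measurability of $\sum_{i} f_{i}(s)g_{i}(t)$ is evident, verify the isometry there by your Fubini computation, and extend by density --- this also renders surjectivity of $J$ on all of $L^{p}(\mu,L^{p}(T,\nu))$ unnecessary, since an isometry of the algebraic tensor product into $L^{p}(S\times T,\mu\times\nu)$ with dense range already yields the theorem. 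Second, the Fubini--Tonelli step and the rectangle-density argument require $\sigma$-finiteness, which the statement does not assume; for general positive measure spaces one reduces to the $\sigma$-finite case by noting that every function in $L^{p}$, $p<\infty$, vanishes outside a $\sigma$-finite set.
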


Birkhoff-James orthogonality for the $p$-integrable functions can be derived from \cite[Theorem 1.11]{H5} in the following form:
\begin{thm}\label{identif}
	Let $(S, \mu)$ be a  positive measure space and $f, g\in L^{p}(S, \mu)$, $1 < p<\infty$ such that $f \not \in span\{g\}.$ Then $f\perp_{BJ}g$ if and only if 		
$$\int\limits_{S}{g(s)|f(s)|^{p-1}\sign(f(s))\,d\mu(s)}=0.$$

\end{thm}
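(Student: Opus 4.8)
The plan is to read off the integral criterion from the dual characterization of Birkhoff--James orthogonality in Theorem~\ref{james}, exploiting the fact that $L^{p}(S,\mu)$ is a smooth space for $1<p<\infty$ and hence that every nonzero element has an essentially unique norming functional. By Theorem~\ref{james}, $f\perp_{BJ}g$ holds exactly when some $\phi\in L^{p}(S,\mu)^{*}$ with $|\phi(f)|=\|\phi\|\,\|f\|_{p}$ satisfies $\phi(g)=0$. Since $L^{p}$ is smooth, the norming functional of a nonzero $f$ is unique up to a positive scalar, so this existential statement collapses to a single explicit equation: the distinguished norming functional of $f$ must annihilate $g$.

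First I would produce that functional concretely through the duality $L^{p}(S,\mu)^{*}\cong L^{q}(S,\mu)$, $\tfrac1p+\tfrac1q=1$. Taking $k_{f}:=|f|^{p-1}\sign(f)\in L^{q}$ and pairing it with $L^{p}$ in the standard way, the equality case of H\"older's inequality yields $\phi_{f}(f)=\|f\|_{p}^{p}$ and $\|k_{f}\|_{q}=\|f\|_{p}^{p-1}$, whence $|\phi_{f}(f)|=\|\phi_{f}\|\,\|f\|_{p}$; thus $\phi_{f}$ norms $f$. The strict convexity of $L^{q}$ (equivalently, the smoothness of $L^{p}$) then certifies that $\phi_{f}$ is, up to scaling, the only such functional.

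Putting the pieces together, $f\perp_{BJ}g$ if and only if $\phi_{f}(g)=0$, and because the normalizing constant is immaterial this is precisely the asserted identity $\int_{S}g\,|f|^{p-1}\sign(f)\,d\mu=0$. The hypothesis $f\notin\operatorname{span}\{g\}$ serves only to exclude the collinear case: if $f=\lambda g$ with $\lambda\neq0$ one computes directly that the integral is a nonzero scalar multiple of $\|g\|_{p}^{p}$ while simultaneously $f\nbj g$, so the equivalence survives but conveys nothing; excluding it also aligns the statement with the hypotheses of \cite[Theorem~1.11]{H5} from which it is drawn.

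I expect the genuine difficulty to lie in the complex-scalar bookkeeping. Over $\mR$ the argument above is transparent and returns the identity verbatim, but over $\mC$ one must fix the conjugation in the $L^{p}$--$L^{q}$ pairing and test the defining inequality $\|f+\alpha g\|_{p}\ge\|f\|_{p}$ against all $\alpha\in\mC$, equivalently demand that both partial derivatives of the convex map $\alpha\mapsto\|f+\alpha g\|_{p}^{p}$ vanish at $\alpha=0$; this produces $\overline{\sign(f)}$ in the integrand, matching the conjugation already visible in Theorem~\ref{C(K)}, so the complex statement is to be read with that convention. Verifying the H\"older equality case and the differentiation under the integral sign (with an integrable dominating function) are the only remaining technical points, both routine for $1<p<\infty$.
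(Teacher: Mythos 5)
Your proof is correct, but it takes a genuinely different route from the paper: the paper does not prove this statement at all — it presents it in the Preliminaries as a consequence of Singer's characterization of best approximations in $L^p$ (\cite[Theorem 1.11]{H5}), the link being that $f\perp_{BJ}g$ says precisely that $0$ is a best approximation to $f$ from the closed one-dimensional subspace $\operatorname{span}\{g\}$, and Singer's theorem characterizes best approximations in $L^p$ by exactly the stated integral condition (this is also where the hypothesis $f\notin\operatorname{span}\{g\}$ comes from). Your argument — Theorem \ref{james} plus smoothness of $L^p$ for $1<p<\infty$, plus the explicit H\"older-equality identification of the norming functional $k_f=|f|^{p-1}\sign(f)\in L^q$ — is a self-contained re-proof of that classical fact, and its individual steps (the H\"older equality computation, strict convexity of $L^q$ giving smoothness of $L^p$, invariance of the annihilation condition under scaling of the functional) are all sound. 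What the citation buys the paper is brevity; what your proof buys is transparency and two genuine clarifications. First, your argument shows the hypothesis $f\notin\operatorname{span}\{g\}$ is not actually needed: if $f=\lambda g\neq 0$ both sides of the equivalence are false, and if $f=0$ both are true. Second, your remark about complex scalars is not mere bookkeeping — as literally written the theorem fails over $\mathbb{C}$: in $\ell^2_2$ take $f=(1,i)$ and $g=(i,1)$; then $f\perp_{BJ}g$ (they are Hilbert-orthogonal), yet $\sum_j g_j|f_j|\sign(f_j)=2i\neq 0$, whereas the conjugated integrand $\sum_j g_j|f_j|\overline{\sign(f_j)}$ does vanish. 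So the conjugation convention you propose is exactly what makes the statement correct in the complex case, and (fortunately for the paper) its later application to $L^p(S\times T,\mu\times\nu)$ goes through verbatim under either convention.
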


  \section{Main Results}
  
  We start with a proof of our first  assertion.
\begin{thm}\label{thm:2.1}
Let $X$ and $Y$ be Banach spaces, $x_{1},x_{2}\in X$ with $x_{1}\perp_{BJ}x_{2}.$ Then for any $y_{1},y_{2}\in Y$, $ x_{1}\otimes y_{1}\perp_{BJ}x_{2}\otimes y_{2}$ in $X\otimes^{\alpha}Y$, where $\alpha$ is any cross norm on $X\otimes Y.$
\end{thm}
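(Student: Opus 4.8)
The plan is to deduce the orthogonality from the functional characterisation in Theorem~\ref{james}. According to that criterion, in order to prove $x_{1}\otimes y_{1}\bj x_{2}\otimes y_{2}$ in $X\otimes^{\alpha}Y$ it suffices to exhibit a single functional $F\in(X\otimes^{\alpha}Y)^{*}$ satisfying $|F(x_{1}\otimes y_{1})|=\|F\|\,\|x_{1}\otimes y_{1}\|_{\alpha}$ and $F(x_{2}\otimes y_{2})=0$. The trivial cases can be discarded first: if $x_{1}=0$ or $y_{1}=0$ then $x_{1}\otimes y_{1}=0$ and the orthogonality is automatic, so I assume $x_{1}\neq 0\neq y_{1}$.

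To build $F$ I would first feed the hypothesis $x_{1}\bj x_{2}$ into Theorem~\ref{james}, obtaining $f\in X^{*}$, normalised so that $\|f\|=1$, with $|f(x_{1})|=\|x_{1}\|$ and $f(x_{2})=0$. Next, by the Hahn--Banach theorem I choose a norming functional $g\in Y^{*}$ for $y_{1}$, that is $\|g\|=1$ and $g(y_{1})=\|y_{1}\|$. I then take $F$ to be the functional induced on the algebraic tensor product by the bilinear form $(x,y)\mapsto f(x)g(y)$, so that $F\bigl(\sum_{i}x_{i}\otimes y_{i}\bigr)=\sum_{i}f(x_{i})g(y_{i})$. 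The two required relations are then immediate on the algebraic level: $F(x_{2}\otimes y_{2})=f(x_{2})g(y_{2})=0$, while $|F(x_{1}\otimes y_{1})|=|f(x_{1})|\,|g(y_{1})|=\|x_{1}\|\,\|y_{1}\|$, which equals $\|x_{1}\otimes y_{1}\|_{\alpha}$ precisely because $\alpha$ is a cross norm.

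Everything therefore reduces to the single point that I expect to be the main obstacle: showing that $F$ extends to a bounded functional on $X\otimes^{\alpha}Y$ with $\|F\|=1$. The inequality $\|F\|\geq 1$ is free from the computation above. The real content is the reverse bound $|F(u)|\leq\|u\|_{\alpha}$ for every $u=\sum_{i}x_{i}\otimes y_{i}$; on an elementary tensor it is trivial, since $|F(x\otimes y)|=|f(x)|\,|g(y)|\leq\|x\|\,\|y\|=\|x\otimes y\|_{\alpha}$, but the passage to finite sums is delicate because $\|u\|_{\alpha}$ may be far smaller than $\sum_{i}\|x_{i}\|\,\|y_{i}\|$. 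The cleanest way I see to secure it is to route the estimate through the injective norm: directly from the definition of $\|\cdot\|_{\lambda}$, since $f\in B_{X^{*}}$ and $g\in B_{Y^{*}}$ one has $|F(u)|=\bigl|\sum_{i}f(x_{i})g(y_{i})\bigr|\leq\|u\|_{\lambda}$, so $\|F\|\leq 1$ when the norm is $\lambda$; for a general cross norm the same bound $|F(u)|\le\|u\|_{\alpha}$ then follows as soon as $\|u\|_{\lambda}\le\|u\|_{\alpha}$.

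Granting $\|F\|=1$, Theorem~\ref{james} yields the claim at once; equivalently, for every scalar $\beta$,
\[
\|x_{1}\otimes y_{1}+\beta\,x_{2}\otimes y_{2}\|_{\alpha}\ \ge\ |F(x_{1}\otimes y_{1}+\beta\,x_{2}\otimes y_{2})|\ =\ \|x_{1}\|\,\|y_{1}\|\ =\ \|x_{1}\otimes y_{1}\|_{\alpha}.
\]
The one ingredient to pin down carefully is thus the domination $\|u\|_{\lambda}\le\|u\|_{\alpha}$ of the injective norm by $\alpha$ on the elements in play --- equivalently, that the elementary functional $f\otimes g$ retains dual norm $1$ --- and this is exactly where the cross-norm hypothesis must be invoked. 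It is the property enjoyed by the injective, minimal and $\Delta_{p}$ norms treated later in the paper.
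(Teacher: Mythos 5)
Your proof is, in substance, exactly the paper's: the paper also takes the James functional $\phi_{0}$ for the pair $(x_{1},x_{2})$, a Hahn--Banach norming functional $\psi_{0}$ for $y_{1}$, and bounds $\|x_{1}\ot y_{1}+\mu\,x_{2}\ot y_{2}\|_{\alpha}$ from below by $|\phi_{0}(x_{1})\psi_{0}(y_{1})+\mu\,\phi_{0}(x_{2})\psi_{0}(y_{2})|$ after first passing from $\|\cdot\|_{\alpha}$ to the injective norm $\|\cdot\|_{\lambda}$. So there is no difference of route; the only difference is that you isolate, while the paper silently asserts as the first line of its display, the inequality $\|\cdot\|_{\lambda}\le\|\cdot\|_{\alpha}$.

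That inequality is a genuine gap --- in your write-up and in the paper's. You say the bound ``follows as soon as $\|u\|_{\lambda}\le\|u\|_{\alpha}$'' and that ``this is exactly where the cross-norm hypothesis must be invoked,'' but the cross-norm hypothesis as defined in the paper ($\|x\ot y\|_{\alpha}=\|x\|\,\|y\|$ and nothing more) does not imply it. The domination $\lambda\le\alpha\le\pi$ characterizes the \emph{reasonable} cross norms, i.e., those whose dual norm is again a cross norm; a bare cross norm need not dominate $\lambda$. Concretely: let $X=Y=\mR^{2}$ (Euclidean), identify $X\ot Y$ with the $2\times 2$ real matrices, so elementary tensors are the rank-one matrices, $\lambda$ is the operator norm and the projective norm $\pi$ is the nuclear norm. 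Fix $t>1$ and let $\alpha$ be the Minkowski functional of $B=\mathrm{conv}\bigl(B_{\pi}\cup\{tI,-tI\}\bigr)$, where $B_{\pi}$ is the nuclear-norm unit ball. For unit vectors $z,w$, the reflection $A$ taking $w$ to $z$ is orthogonal with zero trace, so the functional $u\mapsto\mathrm{tr}(A^{\top}u)$ is at most $\|A\|_{\mathrm{op}}=1$ on $B_{\pi}$, vanishes at $\pm tI$, and equals $1$ at $zw^{\top}$; since also $zw^{\top}\in B_{\pi}\subseteq B$, this gives $\alpha(zw^{\top})=1$, so $\alpha$ is a cross norm. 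Yet $\alpha(I)\le 1/t<1=\lambda(I)$, so $\alpha\not\ge\lambda$, and indeed the theorem itself fails for this $\alpha$: taking $x_{1}=y_{1}=e_{1}$ and $x_{2}=y_{2}=e_{2}$ we have $x_{1}\bj x_{2}$ (and even $y_{1}\bj y_{2}$), while $\|x_{1}\ot y_{1}+x_{2}\ot y_{2}\|_{\alpha}=\alpha(I)<1=\|x_{1}\ot y_{1}\|_{\alpha}$, i.e., $x_{1}\ot y_{1}\nbj x_{2}\ot y_{2}$. So the step you correctly flagged cannot be filled under the stated hypothesis: your argument (and the paper's) is valid exactly when $\alpha$ is a reasonable cross norm --- which covers every norm the paper actually uses ($\lambda$, $\Delta_{p}$, the minimal $C^{*}$-norm, the projective norm) --- but not for ``any cross norm'' in the paper's sense.
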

\begin{proof}
 Since $x_{1}\perp_{BJ}x_{2}$, by Theorem \ref{james}, there exists a $\phi_{0}\in X^{*}$ with $||\phi_{0}||=1$ such that $|\phi_{0}(x_{1})|=||x_{1}||$ and $\phi_{0}(x_{2})=0.$  Consider $\psi_{0}\in Y^{*}$ such that $||\psi_{0}||=1$ and $\psi_{0}(y_{1})=||y_{1}||.$  Now for any $\mu\in \mathbb{C},$ we have 

\begin{tabular}{ccl}
$\|(x_{1}\otimes y_{1})+\mu (x_{2}\otimes y_{2})||_{\alpha} $ & $\geq $ & $\|(x_{1}\otimes y_{1})+\mu (x_{2}\otimes y_{2})\|_{\lambda} $\\ 
 & = & $\sup\{|\phi(x_{1})\psi(y_{1})+\mu\phi(x_{2})\psi(y_{2})|: \phi\in B_{X^{*}} , \psi\in B_{Y^{*}}\} $\\
 & $\geq$ & $ | \phi_{0}(x_{1})\psi_{0}(y_{1})+\mu\phi_{0}(x_{2}) \psi_{0}(y_{2})|$ \\
& = & $\|x_{1}\| \|y_{1}\|=\|x_{1}\otimes y_{1}\|_{\alpha}$
\end{tabular}

This completes the proof. 

\end{proof}

In general, converse of the above result is not true.
\begin{example}\label{failexample}

	Let $K_{1}=\{x_{1},x_{2},x_{3}\}$ and $K_{2}=\{y_{1},y_{2},y_{3}\}$ be compact Hausdorff spaces equipped with discrete topology. Consider $f_{1},f_{2}\in C_{\mathbb{C}}(K_1)$ defined as
	$f_1(x_1)=1, f_1(x_2)=1+ 2i, f_1(x_3)=1-2i$ and $f_{2}(x_i)=1$ for $i=1,2,3$. Also, consider $g_{1},g_{2} \in C_{\mathbb{C}}(K_2)$ defined as $g_{1}(y_1)= 1, g_1(y_2)= -1+2i, g_1(y_3)= -1-2i$ and $g_{2}(y_i)=2$ for all $i=1,2,3$. Note that $\|f_1\| = \|g_1\| = \sqrt{5}$,  $M_{f_1} = \{ x_2, x_3\}$ and $M_{g_1} = \{ y_2, y_3\}$.
	Thus, closed convex hull of the sets $\{\overline{f_{1}(t)}f_{2}(t) : t\in\;M_{f_{1}}\}$ and $\{\overline{g_{1}(s)}g_{2}(s) : s\in\;M_{g_{1}}\}$ are the line joining $1+2i,1-2i$ and the line joining $-2+4i,-2-4i$, respectively. Since both the closed convex hulls do not contain the origin, by Theorem \ref{C(K)}, neither $f_{1}\perp_{BJ}f_{2}$ nor $g_{1}\perp_{BJ}g_{2}.$ On the other hand,  $\|f_1 \otimes g_1\| = \|f_1\| \|g_1\| = 5 $ and $M_{f_{1}g_{1}}=\{(x_{2},y_{2}), (x_{2}, y_{3}), (x_{3}, y_{2}), (x_{3}, y_{3})\}$. Since the closed convex hull of the set $\{\overline{f_{1}(t)g_{1}(s)}f_{2}(t)g_{2}(s) : (t,s)\in\;M_{f_{1}g_{1}}\}$ is the triangle with vertices $-10, 6+8i,6-8i$ and it contains the origin, thus, again by  Theorem \ref{C(K)}, $f_{1}g_{1}\perp_{BJ}f_{2}g_{2}$ in $C_{\mathbb{C}}(K_{1}\times K_{2})$. Equivalently, $f_{1}\otimes g_{1}\perp_{BJ}f_{2}\otimes g_{2}$ in $C_{\mathbb{C}}(K_{1})\otimes^{\lambda}C_{\mathbb{C}}(K_{2})$. Since injective norm $\| \cdot \|_{\lambda}$  is the least cross norm on $X\otimes Y$, it follows that  $f_{1}\otimes g_{1}\perp_{BJ}f_{2}\otimes g_{2}$ in $C_{\mathbb{C}}(K_{1})\otimes^{\alpha}C_{\mathbb{C}}(K_{2})$, for any cross norm $\| \cdot \|_{\alpha}$.
\end{example}
\begin{remark}\label{bj-subspaces}
 By considering the subspaces $X_2$ and $Y_2$  spanned by $f_{2}$ and $g_{2}$ respectively in the above example, we observe that   $f_1\ot g_1 \bj X_2 \ot Y_2$, but neither $f_1 \bj X_2$ nor $g_1 \bj Y_2$. 
\end{remark}
It is also interesting to note that $x_1 \otimes x_2 \bj y_1 \otimes y_2$ does not necessarily imply $x_1 \bj y_1$ and $y_1 \bj y_2$ both. This can be illustrated by the following (and various other) examples:
\begin{example}
For the identity function $Id \in C_{\mathbb{C}}[0,1]$ and the constant unity function $1$, $Id\otimes 1\bj 1\otimes Id$ in $C_{\mathbb{C}}[0,1]\olm C_{\mathbb{C}}[0,1]$. This can be seen using Theorem \ref{idef1}, for any $\mu\in\mathbb{C}$, we have
$$||Id \ot 1+\mu 1 \ot Id||_{\lambda}=||Id.1+\mu 1.Id||=\underset{x,y\in [0,1]}{\sup}|x+\mu y|\geq 1=||Id \otimes 1||.$$ However, $Id\nbj 1$ as $||Id-(1/2)1||=1/2\ngeq ||Id||=1$. The fact that $ 1 \bj Id$ follows easily from the definition.
\end{example}
\begin{example}
Consider the matrices $A,B,C \in M_{2}(\mC)$ given by $A=$
$\begin{bmatrix}
1 & 0\\0 & 2
\end{bmatrix}$, $B=$
$\begin{bmatrix}
1 & 0\\0 & 0
\end{bmatrix}$, $C=$
$\begin{bmatrix}
1 & 1\\1 & 0
\end{bmatrix}$.	We claim that $A\otimes B\bj C\otimes I$ in $M_2({\mC})\omin M_2({\mC}).$ Since $M_2({\mC})\omin M_2({\mC})$ can be identified with $M_{4}({\mC})$ equipped with the spectral norm. Therefore, it is equivalent to check that $P \bj Q$ where $P=$
$\begin{bmatrix}
1 & 0 & 0&0\\0&0&0&0\\0&0&2&0\\0 & 0 & 0 & 0
\end{bmatrix}$ and $Q=$
$\begin{bmatrix}
1&0&1&0\\0&1&0&1\\1&0&0&0\\0&1&0&0
\end{bmatrix}$.
It is easy to check that for the unit vector $x=[
0 \ 0 \ 1 \ 0]^t$ we have $||Px||=2=||P||,\;\langle Px,Qx \rangle=0.$ Thus, by a well known characterization given by Bhatia and Semrl \cite[Theorem 1.1]{G2}, $P\bj Q$. Here $A \bj C$ as for $x=(0,1)^t, \|Ax\|=2 = \|A\|$ and $\langle Ax,Cx \rangle =0$. However, $B\nbj I$ as for any unit vector $x=(x_1,x_2)^t$ in $\mathbb{C}^2$,  $||Bx||=1=||B||$ will imply that $|x_1| =1$ but for such an $x$, $\langle Bx,x \rangle\neq 0.$ 
\end{example}
Here is another non-trivial example.
\begin{example}
	Consider the operators $P,Q,R \in B(\ell^2)$ given by 
	$ P(x_{1},x_{2},.....)= (x_{1}, 0,0,.....); $
	$ Q(x_{1},x_{2},.....)= (x_{1}, x_{2},0,.....)$ and $ R(x_{1},x_{2},.....)= (0, x_{1},x_{2},.....)$, and let $I\in B(\ell^2)$ be the identity operator. We claim that $P\otimes I\perp_{BJ}Q\otimes R$ in $B(\ell^2) \omin B( \ell^2)$. To see this, we use the inclusion $B(\ell^2) \omin B( \ell^2) \subseteq B(\ell^2 \bar{\otimes} \ell^2)$, where $\bar{\otimes}$ represents the Hilbertian tensor product, and consider $\lambda \in \mathbb{C},$ 
	\begin{align*} ||(P\otimes I)+\lambda(Q\otimes R)||^{2}&=\underset{h\in \ell^2\bar{\otimes} \ell^2,\;||h||=1}{\sup}||(P\otimes I)h+\lambda(Q\otimes R)h||^{2}\\
		&\geq||(P\otimes I)(e_{1}\otimes e_{2})+\lambda(Q\otimes R)(e_{1}\otimes e_{2})||^2\\
		&=||e_{1}\otimes e_{2}+\lambda e_{1}\otimes e_{3}||^2\\
		&=\langle e_{1}\otimes e_{2}+\lambda e_{1}\otimes e_{3},e_{1}\otimes e_{2}+\lambda e_{1}\otimes e_{3}\rangle\\
		&=1+|\lambda|^2 \geq 1 = ||P\otimes I||
	\end{align*}
	
	Here $I \bj R$ as for the constant sequence of unit vectors $\{h_{n}\}  \in \ell^2$ where $h_n = e_1$ for all $n$, we have $\lim_{n \to\infty}||I(h_{n})||=|| e_1|| =1=||I||$ and $\lim_{n \to\infty} \langle I(h_{n}),R(h_{n}) \rangle= \langle e_1,e_2 \rangle = 0$ and thus orthogonality follows from \cite[Remark 3.1]{G2}. However, $P\not\perp_{BJ}Q$, if not, then we get a sequence of unit vectors say $\{h_{n} = (h_n^{(m)})\}$ in $\ell^2$ such that $\lim_{n \to\infty}||P(h_{n})||=\lim_{n \to\infty}|h^{(1)}_{n}| =||P||={1}$ and $\lim_{n \to\infty} \langle P(h_{n}),Q(h_{n}) \rangle=\lim_{n \to\infty} |h^{(1)}_{n}|^{2}= 0$, which is not possible. 
\end{example}

In many spaces the converse of Theorem \ref{thm:2.1} holds true. We shall now discuss few of such spaces. For a real normed space $X$ and $0 \neq x \in X$, we use the notations, as given in \cite{H13}, $x^+ = \{y\in X : \|x+ \lambda y\| \geq \|x\| \ \forall \lambda \geq 0 \}$ and  $x^- = \{y\in X : \|x+ \lambda y\| \geq \|x\| \ \forall \lambda \leq 0 \}$. It is known that for any $x,y \in X$, either $y\in x^+$ or $y \in x^-$, see \cite[Proposition 2.1]{H13}.  

\begin{thm}
	Let $X$ and $Y$ be real Banach spaces and $x_{1},x_{2}\in X, y_{1},y_{2}\in Y$. Then $x_{1}\otimes y_{1}\perp_{BJ}x_{2}\otimes y_{2}$ in $X\otimes^{\lambda}Y$ if and only if either $x_{1}\perp_{BJ}x_{2}$ or $y_{1}\perp_{BJ}y_{2}.$
\end{thm}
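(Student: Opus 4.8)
The reverse implication is immediate: if $x_1 \bj x_2$, Theorem \ref{thm:2.1} gives $x_1 \ot y_1 \bj x_2 \ot y_2$, and since the injective norm is symmetric in its two factors, the same argument with the roles of $X$ and $Y$ interchanged shows that $y_1 \bj y_2$ also forces $x_1 \ot y_1 \bj x_2 \ot y_2$. So the whole content is the forward implication, and I would establish its contrapositive. The plan is to realise the elementary tensors as genuine continuous functions and then apply the real $C(K)$ characterisation of Theorem \ref{C(K,X)}. Discarding the trivial cases in which one of the four vectors vanishes, put $K = B_{X^*}$ and $L = B_{Y^*}$ with their weak$^*$ topologies, so that $K$ and $L$ are compact Hausdorff. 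Writing out the definition of $\|\cdot\|_\lambda$, for every $\mu \in \mR$
$$\|x_1 \ot y_1 + \mu\, x_2 \ot y_2\|_\lambda = \sup_{(f,g)\in K\times L} |f(x_1)g(y_1) + \mu f(x_2)g(y_2)| = \|F + \mu G\|_\infty,$$
where $F,G \in C_{\mR}(K\times L)$ are defined by $F(f,g)=f(x_1)g(y_1)$ and $G(f,g)=f(x_2)g(y_2)$ (equivalently, one invokes the isometric embedding $X\olm Y \hookrightarrow C_{\mR}(K)\olm C_{\mR}(L) = C_{\mR}(K\times L)$ supplied by Theorem \ref{idef1}). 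Hence $x_1 \ot y_1 \bj x_2 \ot y_2$ in $X\olm Y$ if and only if $F \bj G$ in $C_{\mR}(K\times L)$.

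Next I would identify the maximising set. Since $\|F\|=\|x_1\|\,\|y_1\|$, a point $(f,g)$ lies in $M_F$ exactly when $|f(x_1)|=\|x_1\|$ and $|g(y_1)|=\|y_1\|$, so $M_F = M_1 \times M_2$ with $M_1 = \{f \in K : |f(x_1)|=\|x_1\|\}$ and $M_2 = \{g \in L : |g(y_1)|=\|y_1\|\}$. For real scalars, Theorem \ref{C(K,X)} (taken with $X=\mR$) says $F\bj G$ if and only if the real number $F(f,g)G(f,g)$ is $\geq 0$ at some point of $M_F$ and $\leq 0$ at another; equivalently, the set $\{F(f,g)G(f,g):(f,g)\in M_F\}$ contains both a nonnegative and a nonpositive value. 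Writing $J(x_1)=\{f \in K : f(x_1)=\|x_1\|\}$, a short sign check (using $M_1 = J(x_1)\cup(-J(x_1))$) gives $\{f(x_1)f(x_2) : f \in M_1\} = \|x_1\|\,A_1$ with $A_1 := \{f(x_2): f \in J(x_1)\}$, and likewise $\{g(y_1)g(y_2):g\in M_2\}=\|y_1\|\,A_2$ with $A_2:=\{g(y_2): g \in J(y_1)\}$; consequently $F(f,g)G(f,g)$ ranges over $\|x_1\|\,\|y_1\|\,(A_1\cdot A_2)$ as $(f,g)$ runs over $M_F$. Since $J(x_1)$ is weak$^*$-compact and convex and $f\mapsto f(x_2)$ is weak$^*$-continuous and affine, $A_1$ is a compact interval, and similarly $A_2$; moreover, by Theorem \ref{james}, $x_1 \bj x_2$ if and only if $0\in A_1$ (a norming functional for $x_1$ annihilating $x_2$), and $y_1 \bj y_2$ if and only if $0 \in A_2$.

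The argument then reduces to the following elementary fact about two compact intervals $A_1,A_2\subseteq\mR$: the product set $A_1\cdot A_2 = \{st:s\in A_1,\ t\in A_2\}$ meets both $[0,\infty)$ and $(-\infty,0]$ if and only if $0\in A_1$ or $0\in A_2$. Indeed, if $0\notin A_1$ and $0\notin A_2$, then each interval is single-signed, so $A_1\cdot A_2$ lies strictly on one side of $0$; conversely, if, say, $0\in A_1$ then $0 \in A_1\cdot A_2$. Feeding this back: if neither $x_1\bj x_2$ nor $y_1 \bj y_2$, then $0\notin A_1$ and $0\notin A_2$, so $F(f,g)G(f,g)$ has constant nonzero sign on $M_F$, whence $F\nbj G$ and therefore $x_1\ot y_1 \nbj x_2 \ot y_2$; this is precisely the contrapositive of the forward implication. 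The step I expect to require the most care is the sign bookkeeping in the second paragraph, namely relating the signs of $F(f,g)G(f,g)$ over the full norming set $M_1\times M_2$ (which contains functionals attaining both $+\|x_1\|$ and $-\|x_1\|$) to membership of $0$ in the intervals $A_1,A_2$. It is exactly here that real-valuedness is essential: over $\mC$ the products $\overline{f(x_1)}f(x_2)$ fill out a planar set rather than a signed interval, which is why Example \ref{failexample} can occur.
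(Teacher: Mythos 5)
Your proposal is correct and follows essentially the same route as the paper: both embed $X\olm Y$ isometrically into $C_{\mR}(B_{X^*}\times B_{Y^*})$, note that the maximizing set of $F=f_{x_1}f_{y_1}$ factors as $M_{f_{x_1}}\times M_{f_{y_1}}$, and apply the real-valued characterization of Theorem \ref{C(K,X)} to show that if neither pair is orthogonal then $FG$ has constant strict sign on that set, contradicting orthogonality of the tensors. The only (harmless) differences are in bookkeeping: you encode orthogonality as $0\in A_i$ via Theorem \ref{james} and use convexity of the norming sets to get single-signed intervals, whereas the paper uses the $x^{+}/x^{-}$ dichotomy directly and converts $f_{x_1}\bj f_{x_2}$ back to $x_1\bj x_2$ by a Hahn--Banach supremum argument at the end.
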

\begin{proof} In view of Theorem \ref{thm:2.1}, we only need to prove the necessary part. Suppose that $v_{1}=x_{1}\otimes y_{1}\perp_{BJ}x_{2}\otimes y_{2}=v_{2}$ in $X\otimes^{\lambda}Y.$ By \cite[Section 3.4]{H3}, we know that  $X\otimes^{\lambda}Y$ can be embedded (isometrically) as a subspace of $C_{\mathbb{R}}(B_{X^{*}}\times B_{Y^{*}})$ via the embedding $(\sum_{i=1}^n x_i \ot y_i) \hookrightarrow \sum_{i=1}^n f_{x_i} f_{y_i}$, where $f_{x}:X^{*} \to \mathbb{R}$ represents the map $f_{x}(\phi)=\phi(x),$ for all $\phi \in X^*$;    
	 here  $B_{X^{*}},B_{Y^{*}}$ are closed unit balls in $X^{*}$ and $Y^{*}$ respectively equipped with the weak$^*$-topology. Note that, by Banach Alaoglu's Theorem,   $B_{X^{*}}\times B_{Y^{*}}$ is compact, $B_{X^{*}}$ and $B_{Y^{*}}$ being compact.
	Thus, $v_{1}\perp_{BJ}v_{2}$ in $X\otimes^{\lambda}Y$ is equivalent to saying that $f_{x_{1}}f_{y_{1}}\perp_{BJ}f_{x_{2}}f_{y_{2}}$ in $C_\mathbb{R}(B_{X^{*}}\times B_{Y^{*}}).$
	We claim that either $f_{x_{1}}\perp_{BJ}f_{x_{2}}$ in $C_\mathbb{R}(B_{X^{*}})$ or $f_{y_{1}}\perp_{BJ}f_{y_{2}}$ in $C_\mathbb{R}(B_{Y^{*}})$. Let, if possible, neither $f_{x_{1}}\perp_{BJ}f_{x_{2}}$ nor $f_{y_{1}}\perp_{BJ}f_{y_{2}}.$ Then, by Theorem \ref{C(K,X)},  either $f_{x_{2}}(\phi)\in f_{x_{1}}(\phi)^{+}$ or $f_{x_{2}}(\phi)\in f_{x_{1}}(\phi)^{-}\;\forall \;\phi \in M_{f_{x_{1}}}$, and similarly, either $f_{y_{2}}(\psi)\in f_{y_{1}}(\psi)^{+}$ or $f_{y_{2}}(\psi)\in f_{y_{1}}(\psi)^{-}\;\forall \;\psi \in M_{f_{y_{1}}}.$ Let us discuss the case when $f_{x_{2}}(\phi)\in f_{x_{1}}(\phi)^{+}$ and $f_{y_{2}}(\psi)\in f_{y_{1}}(\psi)^{+}$ for all $\phi \in M_{f_{x_{1}}}, \psi \in M_{f_{y_{1}}}$. Then, we have $f_{x_{1}}(\phi)f_{x_{2}}(\phi)>0$ and $f_{y_{1}}(\psi)f_{y_{2}}(\psi)>0$  for all $\phi \in M_{f_{x_{1}}}, \psi \in M_{f_{y_{1}}}$. This gives  $f_{x_{1}}(\phi)f_{x_{2}}(\phi)f_{y_{1}}(\psi)f_{y_{2}}(\psi)>0\;\forall\;(\phi,\psi)\;\in M_{f_{x_{1}}}\times M_{f_{y_{1}}}=M_{f_{x_{1}}f_{y_{1}}}$ which contradicts the hypothesis $f_{x_{1}}f_{y_{1}}\perp_{BJ}f_{x_{2}}f_{y_{2}}.$ So this case is not possible. Similarly, we will get contradiction in rest of the three cases.
	
	Now,  assume that $f_{x_{1}}\perp_{BJ}f_{x_{2}}.$  Therefore for any scalar $\mu\in \mathbb{R},$ we have  $\sup\{|f_{x_{1}}(\phi)+\mu f_{x_{2}}(\phi)|: \phi\in B_{X^{*}}\}\geq \sup\{|f_{x_{1}}(\phi)|: \phi\in B_{X^{*}}\}$ which, using a consequence of Hahn Banach Theorem, gives that $||x_{1}+\mu x_{2}||\geq ||x_{1}||$ and thus $x_1 \bj x_2$. Similarly  $f_{y_{1}}\perp_{BJ}f_{y_{2}}$ would imply that $y_1 \bj y_2$. 
\end{proof}
Birkhoff-James orthogonality behaves well for the injective tensor norm on the real Banach spaces. Moving from the smallest cross norm to the largest cross norm on the tensor product of Banach spaces, we are now interested in analysing the spaces for which the BJ-orthogonality has a sound behaviour with respect to the projective tensor norm. We have already seen in Example \ref{failexample} that the   space of continuous complex valued functions is not the right candidate. We next move to the space of $p$-integrable functions.

 %
\begin{thm}
	Let $(S, \mu)$ and $(T, \nu)$ be positive measure spaces. Then $f_{1}\otimes f_{2}\perp_{BJ}g_{1}\otimes g_{2}$ in $L^{p}(S, \mu)\otimes^{\Delta_{p}}L^{p}(T, \nu)$, $1 < p<\infty$ if and only if either $f_{1}\perp_{BJ}g_{1}$ or $f_{2}\perp_{BJ}g_{2}.$
\end{thm}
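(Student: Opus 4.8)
The plan is to reduce the statement to a concrete integral criterion using the identifications already collected in the preliminaries, and then argue by a sign/vanishing dichotomy exactly analogous to the injective case. By Theorem~\ref{identif2}, the space $L^{p}(S,\mu)\otimes^{\Delta_{p}}L^{p}(T,\nu)$ is isometrically isomorphic to $L^{p}(S\times T,\mu\times\nu)$ via $f_{1}\otimes f_{2}\mapsto f_{1}f_{2}$, where $(f_{1}f_{2})(s,t)=f_{1}(s)f_{2}(t)$. Thus the hypothesis $f_{1}\otimes f_{2}\perp_{BJ}g_{1}\otimes g_{2}$ is equivalent to $f_{1}f_{2}\perp_{BJ}g_{1}g_{2}$ in $L^{p}(S\times T,\mu\times\nu)$, and by Theorem~\ref{thm:2.1} only the necessity requires proof.

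For the necessity, I would first handle the degenerate linear-dependence cases separately (if say $g_{1}\in\mathrm{span}\{f_{1}\}$ then $f_{1}\perp_{BJ}g_{1}$ holds trivially in the real case via $x^{+}/x^{-}$ considerations, and similarly on the other side), so that Theorem~\ref{identif} applies. In the generic case, Theorem~\ref{identif} characterizes the orthogonality $f_{1}f_{2}\perp_{BJ}g_{1}g_{2}$ by the vanishing of the integral
\begin{equation*}
	\int_{S\times T} g_{1}(s)g_{2}(t)\,\bigl|f_{1}(s)f_{2}(t)\bigr|^{p-1}\sign\bigl(f_{1}(s)f_{2}(t)\bigr)\,d(\mu\times\nu)(s,t).
\end{equation*}
The key observation is that this integrand factors completely across the two variables: $\bigl|f_{1}(s)f_{2}(t)\bigr|^{p-1}=\bigl|f_{1}(s)\bigr|^{p-1}\bigl|f_{2}(t)\bigr|^{p-1}$ and $\sign\bigl(f_{1}(s)f_{2}(t)\bigr)=\sign(f_{1}(s))\sign(f_{2}(t))$ (interpreting $\sign$ as in Theorem~\ref{identif}). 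Hence, by Fubini's theorem, the double integral splits as a product
\begin{equation*}
	\left(\int_{S} g_{1}(s)\,|f_{1}(s)|^{p-1}\sign(f_{1}(s))\,d\mu(s)\right)\left(\int_{T} g_{2}(t)\,|f_{2}(t)|^{p-1}\sign(f_{2}(t))\,d\nu(t)\right).
\end{equation*}
A product of two scalars is zero if and only if at least one factor is zero, and by Theorem~\ref{identif} the vanishing of the first factor is exactly $f_{1}\perp_{BJ}g_{1}$ while the vanishing of the second is exactly $f_{2}\perp_{BJ}g_{2}$. This immediately yields the dichotomy in the conclusion.

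The main obstacle I anticipate is bookkeeping rather than a deep difficulty: I must justify that each of the three integrals is finite and that Fubini applies, which follows from H\"older's inequality since $|f_{i}|^{p-1}\in L^{p'}$ (with $p'$ the conjugate exponent) and $g_{i}\in L^{p}$, so the integrands are genuinely integrable. The more delicate point is the real-versus-complex setting and the exact meaning of $\sign$: in the real case $\sign$ is the ordinary $\pm1$-valued sign and the factorization is transparent, but one must confirm that Theorem~\ref{identif} is being used over the correct scalar field and that the linear-dependence exclusion hypothesis $f_{1}f_{2}\notin\mathrm{span}\{g_{1}g_{2}\}$ is compatible with the corresponding one-variable exclusions. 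I would dispatch those boundary cases first, then present the factor-and-Fubini computation as the heart of the argument.
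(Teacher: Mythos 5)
Your proposal is correct and takes essentially the same route as the paper's own proof: identify $L^{p}(S,\mu)\otimes^{\Delta_{p}}L^{p}(T,\nu)$ with $L^{p}(S\times T,\mu\times\nu)$ via Theorem~\ref{identif2}, then apply Theorem~\ref{identif} together with the Fubini factorization of the weighted integral into the product of the two one-variable integrals; the paper merely phrases this contrapositively (both factors nonzero forces the double integral nonzero), which is logically the same dichotomy you state directly. Your added attention to the $f\notin\mathrm{span}\{g\}$ hypothesis of Theorem~\ref{identif}, to integrability for Fubini, and to the real-versus-complex meaning of $\sign$ addresses details the paper passes over silently, but it does not constitute a different method.
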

\begin{proof}
	Using the identification mentioned in Theorem \ref{identif2}, we know that $f_{1}\otimes f_{2}\perp_{BJ}g_{1}\otimes g_{2}$ in $L^{p}(S, \mu)\otimes^{\Delta_{p}}L^{p}(T, \nu)$ if and only if $f_{1}f_{2}\perp_{BJ}g_{1}g_{2}$ in $L^{p}(S\times T, \mu\times\nu).$ Let if possible, neither $f_{1}\not \perp_{BJ}g_{1}$ nor $f_{2}\not \perp_{BJ}g_{2}.$
	 By Theorem \ref{identif}, we have $$\int\limits_{S}{g_{1}(s)|f_{1}(s)|^{p-1}\sign(f_{1}(s))\,d\mu(s)}\neq0 \quad \text{and} $$ $$\int\limits_{T}{g_{2}(t)|f_{2}(t)|^{p-1}\sign(f_{2}(t))\,d\nu(t)}\neq0.$$ 
	This implies that  $$\left(\int\limits_{S}{g_{1}(s)|f_{1}(s)|^{p-1}\sign(f_{1}(s))\,d\mu(s)}\right) \left(\int\limits_{T}{g_{2}(t)|f_{2}(t)|^{p-1}\sign(f_{2}(t))\,d\nu(t)}\right) $$
	$$ =\iint\limits_{S\times T}{g_{1}(s)g_{2}(t)|f_{1}(s)f_{2}(t)|^{p-1}\sign(f_{1}(s)f_{2}(t))\,(d\mu(s)\times d\nu(t))}\neq0,$$ which implies that $f_{1}f_{2}\not \perp_{BJ}g_{1}g_{2}$ in $L^{p}(S\times T,\mu\times\nu)$. Hence the proof.
\end{proof}


We now turn our attention back  to the space of continuous complex valued functions. It was illustrated in Example \ref{failexample} that the BJ-orthogonality doesn't work well for the tensor product $C_\mathbb{C}(K_{1})\otimes^{\alpha}C_\mathbb{C}(K_{2})$ with respect to any cross norm $\alpha$. Although in a special situation it does work well.

\begin{propn}
	Let $f_{1} \in\;C_\mathbb{C}(K_{1})$ and $g_{1} \in\;C_\mathbb{C}(K_{2})$ both attain their norms at a unique point. Then, for any $f_{2}\in\;C_\mathbb{C}(K_{1})$ and $g_{2}\in\;C_\mathbb{C}(K_{2})$,  $f_{1}\otimes g_{1}\perp_{BJ}f_{2}\otimes g_{2}$ in $C_\mathbb{C}(K_{1})\otimes^{\lambda}C_\mathbb{C}(K_{2})$ implies $f_{1}\perp_{BJ} f_{2}$ or $g_{1}\perp_{BJ} g_{2}.$
	
\end{propn}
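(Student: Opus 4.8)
The plan is to push the orthogonality across the isometric identification of Theorem \ref{idef1} and then read everything off the geometric criterion of Theorem \ref{C(K)}. Under $C_\mathbb{C}(K_1)\otimes^{\lambda}C_\mathbb{C}(K_2)\cong C_\mathbb{C}(K_1\times K_2)$, the relation $f_1\otimes g_1\bj f_2\otimes g_2$ becomes $f_1g_1\bj f_2g_2$ in $C_\mathbb{C}(K_1\times K_2)$, where $(f_1g_1)(k_1,k_2)=f_1(k_1)g_1(k_2)$. In view of Theorem \ref{thm:2.1} only the forward implication is needed, but the argument below will in fact produce an equivalence.

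The decisive point is to compute the maximizing set $M_{f_1g_1}$. Write $M_{f_1}=\{a\}$ and $M_{g_1}=\{b\}$, which are singletons precisely by the hypothesis that $f_1$ and $g_1$ attain their norms at unique points. Since $|(f_1g_1)(k_1,k_2)|=|f_1(k_1)|\,|g_1(k_2)|\le\|f_1\|\,\|g_1\|=\|f_1g_1\|$, equality forces $|f_1(k_1)|=\|f_1\|$ and $|g_1(k_2)|=\|g_1\|$ simultaneously, i.e. $k_1=a$ and $k_2=b$. Hence $M_{f_1g_1}=\{(a,b)\}$ is a single point. This collapse is exactly what the uniqueness assumption buys, and I expect it to be the only genuinely load-bearing step; without it $M_{f_1g_1}$ could be a larger product set and the argument that follows would break down.

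With $M_{f_1g_1}$ a singleton, the set $A$ appearing in Theorem \ref{C(K)} reduces to the single point $\overline{f_1(a)g_1(b)}\,f_2(a)g_2(b)=\bigl(\overline{f_1(a)}f_2(a)\bigr)\bigl(\overline{g_1(b)}g_2(b)\bigr)$. The closed convex hull of a one-point set contains the origin if and only if that point is $0$, so by Theorem \ref{C(K)} we get $f_1g_1\bj f_2g_2$ if and only if $\bigl(\overline{f_1(a)}f_2(a)\bigr)\bigl(\overline{g_1(b)}g_2(b)\bigr)=0$. Because $\mathbb{C}$ has no zero divisors, one of the two factors must vanish.

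Finally I would interpret each vanishing factor back in its own space. Since $M_{f_1}=\{a\}$ is a singleton, Theorem \ref{C(K)} applied in $C_\mathbb{C}(K_1)$ says $f_1\bj f_2$ exactly when $\overline{f_1(a)}f_2(a)=0$; likewise $g_1\bj g_2$ exactly when $\overline{g_1(b)}g_2(b)=0$. Therefore $\overline{f_1(a)}f_2(a)=0$ gives $f_1\bj f_2$ and $\overline{g_1(b)}g_2(b)=0$ gives $g_1\bj g_2$, which completes the proof (and in fact shows the converse as well, consistent with Theorem \ref{thm:2.1}).
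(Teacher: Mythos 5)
Your proof is correct and is exactly the argument the paper intends: the paper's own proof is the single line ``It follows easily from Theorem \ref{C(K)},'' and your route---transporting the orthogonality to $C_\mathbb{C}(K_1\times K_2)$ via Theorem \ref{idef1}, observing that the uniqueness hypothesis collapses $M_{f_1g_1}$ to the single point $(a,b)$, and then reading the conclusion off the one-point set $A$ in Theorem \ref{C(K)} together with the factorization $\overline{f_1(a)g_1(b)}\,f_2(a)g_2(b)=\bigl(\overline{f_1(a)}f_2(a)\bigr)\bigl(\overline{g_1(b)}g_2(b)\bigr)$---is precisely the detail the paper leaves to the reader. Your write-up also correctly notes the bonus equivalence (the converse direction), which is consistent with Theorem \ref{thm:2.1}.
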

\begin{proof}
	It follows easily from Theorem \ref{C(K)}.
\end{proof}

However, if we treat  $C_\mathbb{C}(K)$, $K$ being a compact Hausdorff space, as a $C^*$-algebra then we have the desired result with BJ-orthogonality being replaced by the strong BJ-orthogonality. 
Recall that, if $H$ is a right Hilbert $C^{*}$-module over a $C^{*}$-algebra $A$, then for $h_1, h_2 \in H$, we say that $h_{1}$ is strongly Birkhoff-James orthogonal to $h_{2}$ (written as $h_{1}\perp_{BJ}^{s}h_{2}),$ if $||h_{1}+h_{2}a||\geq ||h_{1}||$ for all $a\in A$, see \cite{H9}. It is obvious that any $C^*$-algebra is a Hilbert $C^{*}$-module over itself. Also, it is worth mentioning that for commutative $C^*$-algebras, the injective norm coincides with the minimal C*-tensor norm \cite[Theorem 4.14]{H2}. More precisely, $C_\mathbb{C}(K_{1})\otimes^{\lambda}C_\mathbb{C}(K_{2})$ is isometrically isomorphic to $C_\mathbb{C}(K_{1})\omin C_\mathbb{C}(K_{2})$ and the latter is again a $C^*$-algebra.

\begin{thm}
Let $K_{1}$ and $K_{2}$ be compact Hausdorff spaces, $f_{1},f_{2}\in\;C_\mathbb{C}(K_{1})$ and $g_{1},g_{2}\in\;C_\mathbb{C}(K_{2}).$ Then $f_{1}\otimes g_{1}\perp_{BJ}^{s}f_{2}\otimes g_{2}$ in $C_\mathbb{C}(K_{1})\otimes^{\min}C_\mathbb{C}(K_{2})$ if and only if either $f_{1}\perp_{BJ}^{s} f_{2}$ or $g_{1}\perp_{BJ}^{s} g_{2}.$
\end{thm}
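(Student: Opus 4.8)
The plan is to transport the entire statement into the commutative $C^*$-algebra $C_{\mathbb{C}}(K_1\times K_2)$ and then reduce everything to a single pointwise criterion for strong Birkhoff-James orthogonality in $C_{\mathbb{C}}(K)$. First I would invoke the identifications recalled just before the statement: $C_{\mathbb{C}}(K_1)\otimes^{\min}C_{\mathbb{C}}(K_2)$ is isometrically $*$-isomorphic to $C_{\mathbb{C}}(K_1)\otimes^{\lambda}C_{\mathbb{C}}(K_2)$ (coincidence of the injective and minimal norms for commutative $C^*$-algebras), which by Theorem \ref{idef1} is in turn isometrically $*$-isomorphic to $C_{\mathbb{C}}(K_1\times K_2)$ via $f\otimes g\mapsto fg$. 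Since a $*$-isomorphism $\Phi$ carries the self-action to the self-action ($\Phi(a_2)\Phi(a)$ ranges over the whole target as $a$ ranges over the source), strong BJ-orthogonality transfers faithfully. Hence it suffices to prove that $f_1g_1\perp_{BJ}^{s}f_2g_2$ in $C_{\mathbb{C}}(K_1\times K_2)$ if and only if $f_1\perp_{BJ}^{s}f_2$ in $C_{\mathbb{C}}(K_1)$ or $g_1\perp_{BJ}^{s}g_2$ in $C_{\mathbb{C}}(K_2)$.

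The key step is an auxiliary characterization, which I would isolate as a lemma: for nonzero $f\in C_{\mathbb{C}}(K)$ and any $g\in C_{\mathbb{C}}(K)$, writing $M_f=\{t:|f(t)|=\|f\|\}$ and $Z_g=\{t:g(t)=0\}$, one has $f\perp_{BJ}^{s}g$ (with $C_{\mathbb{C}}(K)$ a module over itself) if and only if $M_f\cap Z_g\neq\emptyset$. The easy direction is immediate: if $t_0\in M_f$ and $g(t_0)=0$, then $\|f+gh\|\geq|f(t_0)+g(t_0)h(t_0)|=\|f\|$ for every $h$. For the substantive converse I would assume $g$ vanishes nowhere on the compact set $M_f$ and construct $h$ with $\|f+gh\|<\|f\|$: the function $-f/g$ is continuous on $M_f$, so extend it by Tietze to $\tilde h\in C_{\mathbb{C}}(K)$, whence $f+g\tilde h$ vanishes on $M_f$ and, by continuity, has modulus $<\|f\|$ on a closed neighbourhood $\overline U\supseteq M_f$. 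Taking a Urysohn function $\varphi$ equal to $1$ on $M_f$ and $0$ off $U$ and setting $h=\varphi\tilde h$, one writes $f+gh=(1-\varphi)f+\varphi(f+g\tilde h)$ on $U$; since $|f|\leq\|f\|$ with equality only on $M_f$ (where the $f$-weight is $0$) and $|f+g\tilde h|<\|f\|$ on $\overline U$, a triangle-inequality estimate gives $|f+gh|<\|f\|$ on $U$, while off $U$ we have $h=0$ and $|f|\leq\max_{K\setminus U}|f|<\|f\|$. Thus $f\not\perp_{BJ}^{s}g$.

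With the lemma in hand the theorem becomes bookkeeping. Since $\|f_1g_1\|=\|f_1\|\,\|g_1\|$, the maximal-modulus set factors as $M_{f_1g_1}=M_{f_1}\times M_{g_1}$, while plainly $Z_{f_2g_2}=(Z_{f_2}\times K_2)\cup(K_1\times Z_{g_2})$. Therefore $M_{f_1g_1}\cap Z_{f_2g_2}$ equals $\bigl((M_{f_1}\cap Z_{f_2})\times M_{g_1}\bigr)\cup\bigl(M_{f_1}\times(M_{g_1}\cap Z_{g_2})\bigr)$, and because $M_{f_1}$ and $M_{g_1}$ are nonempty (norm-attaining sets of the nonzero functions $f_1,g_1$ on compact spaces), this is nonempty exactly when $M_{f_1}\cap Z_{f_2}\neq\emptyset$ or $M_{g_1}\cap Z_{g_2}\neq\emptyset$. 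Translating both ends through the lemma yields precisely $f_1\perp_{BJ}^{s}f_2$ or $g_1\perp_{BJ}^{s}g_2$. The degenerate cases $f_1=0$ or $g_1=0$ are trivial, since then $f_1\otimes g_1=0$ and both sides of the equivalence hold automatically.

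I expect the main obstacle to be the converse half of the lemma, namely the explicit construction of an $h$ that strictly lowers the supremum norm when $g$ has no zero on $M_f$: this is where compactness of $K$ together with the Tietze and Urysohn theorems genuinely enter, and the convex-combination estimate must be set up so that strictness survives uniformly across $U$ and its complement. Once the lemma is secured, the remaining work is the routine factorization of $M_{f_1g_1}$ and $Z_{f_2g_2}$ together with the transfer of strong orthogonality along the isometric $*$-isomorphism, the latter being automatic but worth stating explicitly.
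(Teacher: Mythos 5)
Your proposal is correct, and its skeleton is exactly the paper's: pass to $C_{\mathbb{C}}(K_1\times K_2)$ via the minimal-equals-injective identification and Theorem \ref{idef1}, then reduce everything to the characterization that $f\perp_{BJ}^{s}g$ in $C_{\mathbb{C}}(K)$ precisely when $g$ vanishes somewhere on $M_f$, and finish by factoring $M_{f_1g_1}=M_{f_1}\times M_{g_1}$ and locating a zero of $f_2g_2$ there.

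The one real difference is that the paper does not prove this characterization: it cites it as \cite[Proposition 4.2]{H9} and applies it three times (once on $K_1\times K_2$ for the forward direction, once on each factor, and implicitly for the converse). You instead prove the characterization from scratch, and your Tietze--Urysohn construction is sound: extending $-f/g$ from $M_f$, damping by a Urysohn function supported in a neighbourhood $U$ where $|f+g\tilde h|<\|f\|$, and checking pointwise strict inequality in the three regimes ($\varphi>0$ on $U$, $\varphi=0$ on $U$, and $K\setminus U$) does yield $\|f+gh\|<\|f\|$, with compactness of $K$ converting pointwise strictness into strictness of the supremum --- a point you leave implicit but which is standard. Your set-theoretic bookkeeping ($Z_{f_2g_2}=(Z_{f_2}\times K_2)\cup(K_1\times Z_{g_2})$, nonemptiness of $M_{f_1}, M_{g_1}$) packages both directions of the equivalence at once, where the paper argues each direction separately with explicit points $(x_0,y_0)$; these are the same computation. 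You also handle the degenerate cases $f_1=0$ or $g_1=0$, which the paper glosses over. Net effect: your argument is self-contained where the paper's leans on a reference, at the cost of about a paragraph of classical topology.
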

\begin{proof} 
First suppose that $f_{1}\otimes g_{1}\perp_{BJ}^{s}f_{2}\otimes g_{2}.$ Using the identification as in Theorem \ref{idef1}, we have $f_1g_1 \perp_{BJ}^{s} f_2g_2$ in $C_\mathbb{C}(K_{1} \times K_{2})$. By \cite[Proposition 4.2]{H9}, there exists $(x_{0}, y_{0})\in K_1 \times K_2$ such that $|f_1(x_0)g_1(y_0)|= \|f_1\| \|g_1\|$ and $f_{2}(x_{0})g_{2}(y_{0})=0.$ The former condition implies that $||f_{1}||=|f_1(x_{0})|$ and $||g_{1}||=|g_{1}(x_{0})|$ and the latter one implies either $f_{2}(x_{0})=0$ or $g_{2}(y_{0})=0.$ 
 Thus, again by using  \cite[Proposition 4.2]{H9}, we have  $f_{1}\perp_{BJ}^{s}f_{2}$ or $g_{1}\perp_{BJ}^{s} g_{2}.$ Conversely, let us assume that $f_{1}\perp_{BJ}^{s} f_{2}.$ So there exists $x_{0}\in\;M_{f_{1}}$ such that $f_{2}(x_{0})=0.$  Since $M_{g_{1}} \neq \phi$, by fixing any $y_{0}\in\;M_{g_{1}}$, we have $f_{2}(x_{0})g_{2}(y_0)=0$ and $|f_1(x_0)g_1(y_0)|= \|f_1\| \|g_1\|$. Thus $f_{1}\otimes g_{1}\perp_{BJ}^{s}f_{2}\otimes g_{2}$.
\end{proof}
\begin{remark}
Above result gives that $f_{1}\otimes g_{1}\perp_{BJ}^{s}f_{2}\otimes g_{2}$ implies either $f_{1}\perp_{BJ}^{s} f_{2}$ or $g_{1}\perp_{BJ}^{s} g_{2}.$ So $f_{1}\bj  f_{2}h$ or $g_{1}\perp_{BJ} g_{2}k$ for every $h\in\;C_{\mathbb{C}}(K_{1})\;,\;k\in\;C_{\mathbb{C}}(K_{2}).$ Since strong BJ-orthogonality implies BJ-orthogonality, this gives $f_{1}\perp_{BJ}f_{2}$ or $g_{1}\perp_{BJ}g_{2}.$
\end{remark}
 
Lastly, we deviate our attention a little bit from the commutative setup to a non-commutative setting. It is quite obvious to ask what happens in the tensor product of non-commutative space $B(H)$, the space of bounded linear operators on a Hilbert space. In this direction, we obtain a positive result for the elementary tensors of rank one operators in $B(H)\otimes^{\min}B(H).$
\begin{propn}
Let $H$ be a Hilbert space and let $T_{1}, T_{2}, S_{1}, S_{2}\in\;B(H)$ be rank one operators satisfying $T_{1}\otimes T_{2}\perp_{BJ}S_{1}\otimes S_{2}$ in $B(H)\otimes^{\min}B(H).$ Then either $T_{1}\perp_{BJ}S_{1}$ or $T_{2}\perp_{BJ}S_{2}.$
\end{propn}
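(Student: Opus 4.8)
The plan is to reduce the statement about minimal tensor products of $B(H)$ to a concrete computation using the rank-one structure and the Bhatia--\v{S}emrl characterization of Birkhoff-James orthogonality already invoked in the examples above. Every rank-one operator on $H$ can be written as $T = u\otimes \bar{v}$ (in Dirac notation $|u\rangle\langle v|$), acting by $x\mapsto \langle x,v\rangle u$, with $\|T\| = \|u\|\,\|v\|$. So first I would write $T_1 = u_1\otimes\bar{v}_1$, $S_1 = w_1\otimes\bar{z}_1$ on the first factor and $T_2 = u_2\otimes\bar{v}_2$, $S_2 = w_2\otimes\bar{z}_2$ on the second. The key structural observation is that $T_1\otimes T_2$ is then itself a rank-one operator on the Hilbertian tensor product $H\bar{\otimes}H$, namely $(u_1\otimes u_2)\otimes\overline{(v_1\otimes v_2)}$, and likewise $S_1\otimes S_2$ is rank one. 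This is the crucial simplification: the problem collapses from operators of arbitrary rank to rank-one operators on a single Hilbert space.

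Second, I would recall the orthogonality criterion for rank-one operators. For $A = a\otimes\bar{b}$ and $B = c\otimes\bar{d}$, the operator $A$ attains its norm precisely on the unit vector $b/\|b\|$ (up to phase), so by the Bhatia--\v{S}emrl criterion $A\perp_{BJ} B$ holds if and only if $\langle A\hat{b}, B\hat{b}\rangle = 0$ where $\hat{b} = b/\|b\|$; computing this inner product gives $A\perp_{BJ}B \iff \langle a,c\rangle\,\overline{\langle d,b\rangle} = 0$, i.e. $\langle a,c\rangle = 0$ or $\langle b,d\rangle = 0$. Applying this to the tensor hypothesis $T_1\otimes T_2\perp_{BJ}S_1\otimes S_2$ yields
\begin{equation*}
\langle u_1\otimes u_2,\, w_1\otimes w_2\rangle = 0 \quad\text{or}\quad \langle v_1\otimes v_2,\, z_1\otimes z_2\rangle = 0,
\end{equation*}
which by the multiplicativity of the inner product on $H\bar{\otimes}H$ becomes
\begin{equation*}
\langle u_1,w_1\rangle\langle u_2,w_2\rangle = 0 \quad\text{or}\quad \langle v_1,z_1\rangle\langle v_2,z_2\rangle = 0.
\end{equation*}

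Third, I would unpack this disjunction of products into a statement about the individual factors. Each product vanishing forces one of its factors to vanish, so altogether one of $\langle u_1,w_1\rangle$, $\langle u_2,w_2\rangle$, $\langle v_1,z_1\rangle$, $\langle v_2,z_2\rangle$ is zero. Reading the single-factor Bhatia--\v{S}emrl criterion backwards, $\langle u_1,w_1\rangle = 0$ or $\langle v_1,z_1\rangle = 0$ gives exactly $T_1\perp_{BJ}S_1$, while $\langle u_2,w_2\rangle = 0$ or $\langle v_2,z_2\rangle = 0$ gives $T_2\perp_{BJ}S_2$. Thus in every case we obtain $T_1\perp_{BJ}S_1$ or $T_2\perp_{BJ}S_2$, as required.

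I expect the main obstacle to be not the algebra but the careful handling of the norm-attainment and the passage to the infinite-dimensional Bhatia--\v{S}emrl criterion. In finite dimensions $A = a\otimes\bar{b}$ attains its norm on a genuine unit vector and the criterion is an honest ``there exists $x$'' statement; in infinite dimensions one should use the approximate/limiting form of the criterion (as in \cite[Remark 3.1]{G2}, already used in the examples), so I would verify that the norming vector $b/\|b\|$ really does attain the norm of the rank-one operator and that $\langle A x, Bx\rangle$ at that vector controls orthogonality exactly. A secondary point worth checking is that the identification $B(H)\otimes^{\min}B(H)\subseteq B(H\bar{\otimes}H)$ carries $T_1\otimes T_2$ to the claimed rank-one operator with the claimed norm $\|T_1\|\,\|T_2\|$; this is where the min-norm being a cross norm and the Hilbertian tensor structure are used together, and it underpins the whole reduction.
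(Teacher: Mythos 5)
Your proof is correct and follows essentially the same route as the paper: embed $B(H)\otimes^{\min}B(H)$ into $B(H\bar{\otimes}H)$, observe that $T_1\otimes T_2$ and $S_1\otimes S_2$ are themselves rank-one operators there, apply a rank-one orthogonality criterion, and use multiplicativity of the tensor inner product to split the vanishing products into vanishing individual factors. The only divergence is the provenance of the key criterion ``$a\bar{\otimes}b\perp_{BJ}c\bar{\otimes}d$ if and only if $\langle a,c\rangle=0$ or $\langle b,d\rangle=0$'': the paper simply cites \cite[Example 4.5]{H12} (a pre-Hilbert $C^*$-module result), whereas you re-derive it from the Bhatia--\v{S}emrl characterization, which in infinite dimensions requires the approximate form \cite[Remark 3.1]{G2}. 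The verification you flag does go through: if $\|Ax_n\|\to\|A\|$ for unit vectors $x_n$ and $A=a\bar{\otimes}b$, then Cauchy--Schwarz forces $x_n$ to align (up to phase) with $b/\|b\|$, so $\langle Ax_n,Bx_n\rangle\to 0$ collapses to the exact condition $\langle A\hat{b},B\hat{b}\rangle=0$; thus your route is a sound, self-contained alternative to the citation, at the cost of carrying out that limiting argument explicitly.
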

\begin{proof}
 We can embed  $B(H)\otimes^{\min}B(H)$ into $B(H\bar{\otimes} H)$ isometrically. Note that a rank one operator $T$ in $B(H)$ is of the form $x\bar{\otimes} y$ for some $x,y \in H$, where $(x\bar{\otimes} y)(h) = \langle h,y \rangle x$, for all $h\in H$.  It is easy to check that tensor product of two rank one operators is again a rank one operator on $H\bar{\otimes} H$, more precisely, if $T_{i}=x_{i}\bar{\otimes} y_{i}$, $i=1,2$, then $T_{1}\otimes T_{2}=(x_{1}\otimes x_{2})\bar{\otimes}(y_{1}\otimes y_{2}).$ Taking $S_{i}=z_{i}\bar{\otimes} w_{i}$,  $i=1,2$, since $T_{1}\otimes T_{2}\perp_{BJ}S_{1}\otimes S_{2},$ by \cite[Example 4.5]{H12}, either $\langle x_{1}\otimes x_{2}, z_{1}\otimes z_{2}\rangle=0$ or $\langle y_{1}\otimes y_{2}, w_{1}\otimes w_{2}\rangle=0$. That is, either $\langle x_{1}, z_{1}\rangle\langle x_{2}, z_{2}\rangle=0$ or $\langle y_{1}, w_{1}\rangle\langle y_{2}, w_{2}\rangle=0.$  The former case implies that either $\langle x_{1}, z_{1}\rangle=0$ or $\langle x_{2}, z_{2}\rangle=0,$ which gives either $T_{1}\perp_{BJ}S_{1}$ or $T_{2}\perp_{BJ}S_{2},$ using \cite[Example 4.5]{H12} again. Similarly one can check for the latter case. 
\end{proof}

\section*{acknowledgement}
The authors would like to thank Dr. Ved Prakash Gupta, School of Physical Sciences, Jawahar Lal Nehru University for his numerous valuable suggestions and discussions.

\end{document}